\numberwithin{equation}{section}
\def\wyr#1{\textit{#1}}
\def\r{\rightarrow}
\def\t{\times}
\def\s{\subseteq}
\def\a{\alpha}
\def\C{\mathcal{C}}
\def\F{\mathcal{F}}
\def\H{\mathcal{H}}
\def\G{\mathcal{G}}
\def\U{\mathcal{U}}
\def\V{\mathcal{V}}
\def\K{\mathcal{K}}
\def\N{\mathcal{N}}
\def\SC{\mathcal C\mathcal N}
\def\SS{\mathcal{G}}
\def\ZZ{\mathcal{Z}}
\def\t{\times}
\def\ci{\circ}
\def\cd{\cdot}
\def\rz{\Bbb R}
\def\ld{,\ldots ,}
\def\12{{1\over 2}}
\def\rz{\mathbb{R}}
\def\R{\rz}
\def\l{\lambda}
\def\wyr#1{\textit{#1}}
\def\f{\Phi}
\def\xg{\frak X_G}
 \DeclareMathOperator{\Sect}{Sect}
 \DeclareMathOperator{\gau}{Gau}
 \DeclareMathOperator{\conj}{conj}
 \DeclareMathOperator{\cl}{cl}
 \DeclareMathOperator{\intt}{int}
\DeclareMathOperator{\id}{id} 
\DeclareMathOperator{\ro}{Ro} \DeclareMathOperator{\supp}{supp}
 \DeclareMathOperator{\dom}{dom}
\def\rox{\ro(X)}
\def\roy{\ro(Y)}
\def\gg{\frak g}
\def\p{\partial}
\def\la{\langle}
\def\ra{\rangle}
\DeclareMathOperator{\Aut}{Aut} 
  \DeclareMathOperator{\rob}{Rob}
 \DeclareMathOperator{\var}{var}
\DeclareMathOperator{\carr}{carr} 
 \DeclareMathOperator{\pr}{pr}
\DeclareMathOperator{\ev}{ev}
 \theoremstyle{plain}
\newtheorem{thm}{Theorem}[section]
\newtheorem{lem}[thm]{Lemma}
\newtheorem{prop}[thm]{Proposition}
\newtheorem{cor}[thm]{Corollary}
\theoremstyle{definition}
\newtheorem{dff}[thm]{Definition}
\newtheorem{rem}[thm]{Remark}
\keywords{$G$-space, $G$-manifold, group of homeomorphisms,
reconstruction theorem, factorizable group, equivariant
homeomorphism, constant-like homeomorphism} \subjclass{22A05,
57S05}
\thanks{The second author is partially supported by the Polish Ministry of Science and Higher Education and the
AGH grant n. 11.420.04}
\address{(M.R.) Department of Mathematics, Ben Gourion University \linebreak of the Negev, Beer Sheva,
Israel}  \email {matti@math.bgu.ac.il}
\address{ (T.R.) Faculty of
Applied Mathematics, AGH University of Science  \linebreak  and
Technology, al. Mickiewicza 30, 30-059 Krak\'ow, Poland}
\email{tomasz@uci.agh.edu.pl}
\title{Isomorphisms between groups of equivariant homeomorphisms of  $G$-manifolds with one orbit type}
\author{Matatyahu Rubin, Tomasz Rybicki}
\begin{document}

\maketitle

\begin{abstract}
Given a compact Lie group  $G$, a reconstruction theorem for free
$G$-manifolds is proved. As a by-product reconstruction results
for locally trivial bundles are presented.  Next,  the main
theorem is generalized to $G$-manifolds with one orbit type. These
are the first reconstruction results in the category of
$G$-spaces, showing also that the reconstruction in this category
is very specific and involved.
\end{abstract}

\section{Introduction}
Let $X$ be a regular ($T_3$) topological space and let $\H(X)$ be
the  group of all  homeomorphisms of $X$. If $H\leq\H(X)$ then the
pair $\la X,H\ra$ is called a \wyr{space-group pair}. A class $\K$
of space-group pairs is called a \wyr{faithful class}, if for
every $\la X_1,H_1\ra,\la X_2,H_2\ra\in \K$ and a group
isomorphism $\varphi:H_1\cong H_2$, there exists a homeomorphism
$\tau:X_1\cong X_2$ such that $\varphi (h)=\tau \circ h \circ
\tau^{-1}$ for every $h \in H_1$. The reconstruction problem in
the topological category consists in finding out faithful
space-group pairs. A pioneer work was done here by Whittaker
\cite{whit}. Such problems are also considered in many other
categories, see \cite{Ru1}, \cite{Ru2}, \cite{Ru3}, \cite{ban1},
\cite{RY}, \cite{Ryb1},  \cite{bo-br}.

There are several reconstruction  results  on the ground of
differential geometry. First of all it is well known (\cite{whit},
\cite{fil}, \cite{Ru1}, \cite{ban-m}) that the~group of~all
$C^r$-diffeomorphisms ($0\leq r\leq \infty$) of~a~$C^r$-manifold
defines uniquely the~topological and~smooth structure of~the
manifold. Analogous results are true for the automorphism groups
of some geo\-metric structures, e.g. \cite{ban-m},
 \cite{Ryb4}, \cite{Ryb5}, \cite{Ryb2},  \cite{kmr},
\cite{be-ru}, \cite{Ru4}. Infinitesimal counterparts of the
reconstruction theorems are also known and useful in the proofs of
them (see, e.g., \cite{om}, \cite{gr}, \cite{gr-po} and references
therein).

From now on we denote by $H\leq K$ (resp. $H\lhd K$) the fact that
$H$ is a subgroup (resp. normal subgroup) of a group $K$.

 A group of
homeomorphisms $H\leq\H(X)$ is called \wyr{factorizable} if for
every open covering $\U$ of the space $X$, the set
$\bigcup_{U\in\U}H\begin{tabular}{|@{} c @{}|}
   $U$\\
  \hline
\end{tabular}$
generates $H$. Here $H\begin{tabular}{|@{} c @{}|}
   $U$\\
  \hline
\end{tabular}=\{h\in H:\, h|_{X\setminus U}=\id\}$.

A group $H\leq\H(X)$ is said to be \emph{non-fixing} if $H(x)\neq
\{ x \}$ for every $x \in X$, where $H(x):= \{ h(x)|h \in H \}$ is
the orbit of $H$ at $x$.

The following theorem, due to  the first-named author
(\cite{Ru1}, \cite{Ru3}), is a basic fact in reconstruction
problems of homeomorphism groups.

\begin{thm}
Let $X_1, X_2$ be regular topological spaces  and let $H_1$ and
$H_2$ be factorizable, non-fixing homeomorphism groups of $X_1$
and $X_2$, resp. Suppose that there is an isomorphism
$\varphi:H_1\cong H_2$. Then there is a~homeomorphism $\tau :X_1
\cong X_2$ such that $\varphi (h)=\tau \circ h \circ \tau^{-1}$
for every $h \in H_1$.
\end{thm}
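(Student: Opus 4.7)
The plan is to reconstruct, using nothing but the abstract group $H_1$, the underlying point set $X_1$ together with enough of its topology that the isomorphism $\varphi$ automatically transports the whole picture to the $(X_2,H_2)$ side and so produces the required conjugating homeomorphism $\tau$. This follows the now-classical Whittaker/Rubin strategy of \emph{algebraic recognition of local subgroups}.

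First I would single out, inside each $H_i$, the family of subgroups $H_i|_U := \{h\in H_i : h|_{X_i\setminus U}=\id\}$ for $U\s X_i$ open. The decisive property is that $H_i|_U$ and $H_i|_V$ commute elementwise whenever $\cl(U)\cap \cl(V)=\emptyset$, while factorizability guarantees that the $H_i|_U$ collectively generate $H_i$. The heart of the argument is to show that this family is \emph{algebraically definable} in $H_i$, i.e.\ picked out by a group-theoretic formula built from iterated centralizers, normalizers and commutator subgroups, with no reference to $X_i$. This is where most of the technical work of \cite{Ru1,Ru3} is spent and where I expect to meet the main obstacle.

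Once the local subgroups are algebraically recognised, I would attach to each $x\in X_i$ the filter $\F^i_x$ consisting of those local subgroups whose defining open set contains $x$, and show that $x\mapsto \F^i_x$ is a bijection onto an algebraically characterised class of filters (roughly, those maximal with respect to pairwise non-commutation of all members). Injectivity uses the non-fixing hypothesis directly: given $x\ne y$, non-fixing together with regularity of $X_i$ yields some $h\in H_i$ that moves $x$ into an open set whose closure misses a chosen closed neighborhood of $y$, and this witnesses $\F^i_x\ne \F^i_y$. Surjectivity rests on factorizability combined with regularity, which supplies enough "small" local subgroups to form neighborhood bases.

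With the reconstruction available on both sides, $\varphi$ sends $\F^1_x$ to a filter of the same algebraic type, hence to $\F^2_{\tau(x)}$ for a unique $\tau(x)\in X_2$; this defines the bijection $\tau:X_1\to X_2$. Continuity of $\tau$ and $\tau^{-1}$ follows because $\varphi$ preserves inclusions of local subgroups, which under the dictionary correspond to inclusions of basic open neighborhoods, so $\tau$ carries a neighborhood base at $x$ to one at $\tau(x)$. Finally, the conjugation identity $\varphi(h)=\tau\ci h\ci \tau^{-1}$ is checked by observing that both sides send the filter $\F^1_x$ to $\varphi(h)\cd \F^2_{\tau(x)}$ under the natural action; injectivity of the reconstruction forces $\varphi(h)(\tau(x))=\tau(h(x))$ for every $x\in X_1$, which is exactly the required equivariance.
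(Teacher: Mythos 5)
There is a genuine gap, and it is exactly at the point you flag yourself. First, for context: the paper does not prove this statement at all --- Theorem 1.1 is imported from \cite{Ru1}, \cite{Ru3}, so the only thing to compare your proposal with is Rubin's argument there. Your text is a strategy outline in the Whittaker--Rubin spirit, but the decisive step --- that the family of local subgroups (equivalently, the local structure of the space) is \emph{algebraically definable} inside the abstract group by centralizer/normalizer formulas --- is precisely the content of the theorem, and you explicitly defer it (``this is where I expect to meet the main obstacle''). Everything after that point (the filters $\F^i_x$, the bijection $\tau$, continuity, equivariance) is routine bookkeeping once definability is in hand; without it, nothing has been proved.

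Two more concrete problems with the shape of your plan. (a) The target of the recognition step is not quite right: the subgroups $H_i|_U$ for arbitrary open $U$ are not, in general, recognizable from the group, and Rubin does not attempt this. What is interpreted in the group is the complete Boolean algebra $\ro(X_i)$ of regular open sets together with the induced action of $H_i$ on it (this is the ``locally moving'' theorem, quoted as Theorem 2.1 in the present paper); only afterwards do factorizability and the non-fixing hypothesis enter, to recover the \emph{points} of $X_i$ from filter-type data in $\ro(X_i)$ and to show that the induced Boolean isomorphism $\psi$ is spatial, i.e.\ comes from a homeomorphism. (b) Your proposed algebraic characterization of the point filters leans on the equivalence ``disjoint closures $\Leftrightarrow$ elementwise commutation'' of local subgroups. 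Only one direction is true: disjoint supports give commutation, but commuting elements (and even commuting local subgroups) can have overlapping supports, so ``maximal with respect to pairwise non-commutation'' does not pick out the filters you want. Working around this failure is exactly why the centralizer machinery of \cite{Ru1}, \cite{Ru3} is long and delicate, and it is the part your proposal would have to supply to become a proof.
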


Let $G$ be a compact  Lie group  acting on a topological space
$X$. A homeomorphism $f:X\r X$ is $G$-equivariant, if for every
$g\in G$, $x\in X$, $f(g.x)=g.f(x)$. That is, $\mu_g\ci f=f\ci
\mu_g,\,\forall g\in G$, where $\mu_g:X\ni x\mapsto g.x \in X$ is
the left translation. The symbol  $\H_G(X)_0$ will stand for the
group of all  homeomorphisms of $X$ that can be joined with the
identity by a compactly supported isotopy consisting of
$G$-equivariant homeomorphisms. In general, $\H(X)_0$ will denote
the group of all  homeomorphisms of $X$ that can be joined with
the identity by a compactly supported isotopy.

Let $M$ be a topological manifold with a free $G$-action. Our aim
is to prove a reconstruction theorem for  $\H_G(M)_0$. The case of
free $G$-action can be viewed as a generic case among $G$-actions,
where $G$ is a compact Lie group (\cite{br}).

Recall the following basic fact, c.f. \cite{gle}, \cite{br}. If
$G$ is a compact  Lie group and $G$ acts freely on a
$T_{3\frac{1}{2}}$-space $X$ then
 $X$ can be regarded as the total space of a principal $G$-bundle
$\pi:X\r B_X$. Let $\tilde\pi:\H_G(X)_0\ni f\mapsto\tilde
f\in\H(B_X)_0$ be the induced homomorphism. Next, let
$\gau(X):=\ker(\tilde\pi)$ be the group of gauge transformations
on the total space $X$ of $\pi:X\r B_X$.

By $\Sect(X)$ we denote the set of all continuous sections of the
principal $G$-bundle $\pi:X\r B_X$. If $U$ is an open set of $B_X$
such that $\pi$ is trivial over $U$ then $\Sect(\pi^{-1}(U))$
admits a group structure by pointwise multiplication.

Our main result is the following
\begin{thm}
Let $G$ be a compact  Lie group acting freely  on paracompact
connected smooth manifolds $M$ and $N$.
 If there is a  group isomorphism
$\f:\H_G(M)_0\cong \H_G(N)_0$ then
\begin{enumerate}
\item there is a homeomorphism $\tau:B_M\r B_N$ such that
$\widetilde{\f(f)}=\tau\ci \tilde f\ci\tau^{-1}$ for all
$f\in\H_G(M)_0$;\item there is a continuous mapping
$\bar\f:B_M\r\Aut(G)$; and \item for all $x\in B_M$ there are an
open neighborhood $U$ of $x$ in $B_M$,  isomorphisms
$\phi_U:\Sect(\pi^{-1}(U))\cong \gau(\pi^{-1}(U))$,
$\phi_{\tau(U)}:\Sect(\pi^{-1}(\tau(U)))\cong
\gau(\pi^{-1}(\tau(U)))$
 and a homeomorphism
 $\sigma_{U}:\pi^{-1}(U)\cong\pi^{-1}(\tau(U))$ induced by
 $\bar\f$ such that
$$\f(h)=(\phi_{\tau(U)}\ci\hat\sigma_U\ci\phi_{U}^{-1})(h)$$
for every $h\in \gau(\pi^{-1}(U))$, where
$\hat\sigma_U(f)=\sigma_U\ci f\ci\tau^{-1}$.
\end{enumerate}
\end{thm}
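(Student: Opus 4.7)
The plan is, first, to pass to the orbit spaces $B_M$ and $B_N$ and obtain $\tau$ as a direct application of Theorem~1.1, and second, to analyze the restriction of $\f$ to gauge subgroups over local trivializations in order to produce $\bar\f$ and the explicit local form~(3). The central preliminary step is to show that $\f$ carries the gauge group $\gau_0(M) := \ker(\tilde\pi)\cap \H_G(M)_0$ isomorphically onto $\gau_0(N)$. I would do this by giving a purely group-theoretic characterization of $\gau_0(M)$ inside $\H_G(M)_0$: first recovering the $B_M$-support of an element of $\H_G(M)_0$ from the lattice of pointwise stabilizers by the methods of \cite{Ru1,Ru3} (noting that $\H_G(M)_0$ is factorizable with respect to $G$-saturated open covers of $M$, which correspond bijectively to open covers of $B_M$), and then observing that $h\in\gau_0(M)$ exactly when its $B_M$-support is empty.

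Once $\f(\gau_0(M))=\gau_0(N)$ is granted, $\f$ descends to an abstract isomorphism $\tilde\f:\tilde\pi(\H_G(M)_0)\r\tilde\pi(\H_G(N)_0)$ between subgroups of $\H(B_M)_0$ and $\H(B_N)_0$ that are factorizable (inherited from the $G$-invariant factorization on $M$) and non-fixing (any point of $B_M$ is moved by some $G$-equivariant compactly supported isotopy on $M$). Applying Theorem~1.1 to $\tilde\f$ produces the homeomorphism $\tau:B_M\cong B_N$ realizing $\tilde\f$ by conjugation, which is the content of~(1).

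For~(2) and~(3), fix $x\in B_M$ and a neighborhood $U$ of $x$ over which $\pi$ is trivial, and use a local section to identify $\Sect(\pi^{-1}(U))\cong \map(U,G)\cong \gau(\pi^{-1}(U))$ (the last with pointwise multiplication), which gives the isomorphism $\phi_U$. After possibly shrinking $U$, part~(1) ensures $\f$ maps $\gau(\pi^{-1}(U))$ onto $\gau(\pi^{-1}(\tau(U)))$. The induced group isomorphism $\map(U,G)\cong \map(\tau(U),G)$ is compatible with restriction to open subsets; any such isomorphism must, pointwise at each $x\in U$, carry the evaluation $\ev_x$ to $\ev_{\tau(x)}$ composed with some automorphism of $G$, which we name $\bar\f(x)\in\Aut(G)$. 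One-point evaluations are recognizable algebraically as the kernels of the point-supported quotient maps coming from the point stabilizers recovered in the preceding step. Continuity of $x\mapsto\bar\f(x)$ is then checked by testing on continuous families of gauge elements and using the Lie-group structure of $\Aut(G)$. Finally, $\sigma_U:\pi^{-1}(U)\r\pi^{-1}(\tau(U))$ is assembled from $\tau$ and $\bar\f$ in the chosen trivializations, and~(3) becomes the explicit transcription of the fact that $\f$ acts on gauge transformations over $U$ by conjugation by $\sigma_U$, transported through $\phi_U$ and $\phi_{\tau(U)}$.

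The principal obstacle is the algebraic characterization of $\gau_0(M)$ inside $\H_G(M)_0$: the orbits of $\gau_0(M)$ are precisely the compact $G$-orbits of $M$, a very rigid structure that must be extracted purely from the abstract group, whereas the hypotheses of Theorem~1.1 are naturally suited to groups whose orbits are topologically rich. A secondary, technical difficulty is establishing \emph{continuity} of $\bar\f$: one must show that an abstract group isomorphism between $\map(\cdot,G)$-groups automatically respects their compact-open topology, a fact which ultimately rests on the abundance of ``constant-like'' elements in $\gau(\pi^{-1}(U))$ together with the Lie structure of $\Aut(G)$ for a compact Lie group $G$.
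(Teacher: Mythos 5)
Your route to part (1) is essentially workable and close in spirit to the paper's: the paper gets $\tau$ from Theorem 2.5/Theorem 3.4 (projectable, transversally locally moving groups, via \cite{be-ru}), and your recovery of base supports and of $\ker\tilde\pi$ is the same machinery in a slightly different packaging; note, though, that what you call the ``principal obstacle'' (characterizing $\ker\tilde\pi$) is in fact the easy part, since once the induced action on $\ro(B_M)$ is conjugated by $\psi$ one has $\tilde f=\id$ iff $\widetilde{\f(f)}=\id$, so $\f(\gau(M))=\gau(N)$ is immediate.

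The genuine gap is in your argument for (2)--(3), at the sentence ``part (1) ensures $\f$ maps $\gau(\pi^{-1}(U))$ onto $\gau(\pi^{-1}(\tau(U)))$.'' Part (1) only controls the induced action on $\ro(B_M)$, and every gauge transformation induces the identity there; consequently the conjugation data on the base says nothing about where $\f$ sends a gauge element supported over $U$, nor even that $\f$ localizes on the gauge group at all. The same problem undercuts the next step: the stabilizers recoverable from the base action are the transversal ones $S_x^M$, and knowing $\f(S_x^M)=S_{\tau(x)}^N$ does not give preservation of the fiberwise stabilizers $F_x^M$ (equivalently, of the evaluation kernels you want to ``recognize algebraically''). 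This is exactly where the bulk of the paper's work lies: the constant-like machinery (Definition 3.6, Proposition 3.7 and (3.4)--(3.9)), the decomposition Lemma 3.8, the preservation of $\C^{\ZZ}_U$ (Proposition 3.9) and of $\C^{\id}_U$ (Proposition 3.11, which needs the perfectness result Theorem 3.10 from \cite{Ryb3}), culminating in the characterization $\F_x^M=\{F_x^M\}$ in Lemma 3.12; only then do the fiberwise isomorphisms $\f(x)$, the map $\bar\f$, the maps $\sigma_U$, and the Tietze-type continuity argument follow. The paper moreover stresses that this characterization fails inside $\gau(M)$ alone, so no argument resting only on restriction-compatibility of the groups $\map(U,G)\cong\gau(\pi^{-1}(U))$ can succeed; the ambient group $\H_G(M)_0$ (transversal transitivity, constant-like elements, perfectness) is indispensable. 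As written, your proposal assumes precisely this hardest content of the theorem.
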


If we consider globally trivial principal bundles the theorem
assumes a simpler form, namely we have

\begin{cor} Let $G$ be a compact Lie group and
let $M=B_M\t G$ and $N=B_N\t G$ be free product paracompact
connected smooth $G$-manifolds.
 If there is a  group isomorphism
$\f:\H_G(M)_0\cong \H_G(N)_0$ then
\begin{enumerate}
\item there is a homeomorphism $\tau:B_M\r B_N$ such that
$\widetilde{\f(f)}=\tau\ci \tilde f\ci\tau^{-1}$ for all
$f\in\H_G(M)_0$; and \item there exists
  a  homeomorphism $\sigma:M\cong N$
such that $\f(h)=\sigma\ci h\ci\tau^{-1}$ for all $h\in\gau(M)$
(so $\sigma$ is fiberwise over $\tau$).
\end{enumerate}
\end{cor}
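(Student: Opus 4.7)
The plan is to invoke the main theorem (Theorem 1.2) and collapse its local data to a single globally defined bundle isomorphism, using the triviality of $\pi_M : M \r B_M$ and $\pi_N : N \r B_N$. Immediate application of Theorem 1.2 produces a homeomorphism $\tau : B_M \r B_N$ and a continuous map $\bar\f : B_M \r \Aut(G)$, which gives clause (1) verbatim. Thus the only thing to do beyond the main theorem is to construct the global $\sigma$.

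Because $M = B_M \t G$ is globally trivial, Theorem 1.2(3) applies with $U = B_M$. Moreover, the canonical sections $s_M(x) = (x, e)$ and $s_N(y) = (y, e)$ induce canonical global isomorphisms $\Phi_M : C(B_M, G) \cong \gau(M)$ with $\Phi_M(\a)(x, g) = (x, \a(x) g)$, and analogously $\Phi_N$, realising the abstract $\phi_U$ in an explicit, uniform way across the whole base so that no gluing is required. I then define the candidate
\[
\sigma : M \r N, \qquad \sigma(x, g) = (\tau(x), \bar\f(x)(g)),
\]
which is manifestly fiberwise over $\tau$. Continuity of $\sigma$ follows from continuity of $\tau$ and $\bar\f$ together with joint continuity of the evaluation $\Aut(G) \t G \r G$ (standard since $\Aut(G)$ is a Lie group acting continuously on the compact Lie group $G$); the inverse is given by the analogous formula with $\tau^{-1}$ and $\bar\f(\tau^{-1}(\cd))^{-1}$, so $\sigma$ is a homeomorphism.

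It remains to verify the conjugation formula of clause (2). For $h \in \gau(M)$ corresponding to $\a \in C(B_M, G)$, a short direct computation gives
\[
(\sigma \ci h \ci \sigma^{-1})(y, g) = \bigl(y,\, \bar\f(\tau^{-1}(y))(\a(\tau^{-1}(y))) \cd g\bigr),
\]
and under the identifications $\Phi_M$, $\Phi_N$ this matches exactly the local expression $\f(h) = \phi_{\tau(U)} \ci \hat\sigma_U \ci \phi_U^{-1}(h)$ of Theorem 1.2(3) taken with $U = B_M$; this is the content of clause (2), with the symbol $\tau^{-1}$ in the statement being the shorthand dictated by the bundle identifications (a section $s: B_M \r M$ carried to $\sigma \ci s \ci \tau^{-1} : B_N \r N$).

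The main obstacle I anticipate is not the final computation but the verification that the locally constructed $\sigma_U$ of Theorem 1.2 agrees with the explicit global $\sigma$ defined above. A priori $\sigma_U$ is determined only up to post-composition by a gauge transformation, and one must check that the continuous $\bar\f$ supplied by the main theorem is genuinely compatible with the canonical choice of sections $s_M, s_N$ rather than with some locally adapted trivialisations. Once this compatibility is traced through the construction of $\bar\f$ in Theorem 1.2, everything else is routine.
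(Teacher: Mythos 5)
Your proposal is correct and takes essentially the same route as the paper: the paper proves the corollary simply by rerunning the proof of Theorem 1.2 with the global product trivializations of $M=B_M\times G$ and $N=B_N\times G$, so that the local $\phi_U$ and $\sigma_U$ become the canonical global ones and $\sigma(x,g)=(\tau(x),\bar\f(x)(g))$, exactly your formula (compare Remark 4.1(2)). The compatibility issue you flag at the end is settled automatically by that choice, since $\bar\f$ is built in Section 4 from the chart-independent identifications $\bar\phi^x_i:G\cong\gau_x(M)$, which for the product charts coincide with the canonical ones associated to the sections $s_M$, $s_N$.
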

The proof follows from that of Theorem 1.2.

Notice that in \cite{Ryb3} it was proved by the second-named
author that $\H_G(M)_0$ is perfect, i.e. equal to its commutator
subgroup, provided the $G$-action is of one orbit type. It is very
likely that if we replace $\H_G(M)_0$ by $\H_G(M)$, the group of
all $G$-equivariant homeomorphisms of $M$, such a theorem would be
false. The fact that $\H_G(M)_0$ is perfect occurs to be an
indispensable ingredient of the proof of Theorem 1.2 (see
Proposition 3.11).

Section 2 is devoted to homeomorphism groups related to locally
trivial bundles. It contains some reconstruction results being
consequences of Theorem 1.1. These results are applied in the
proof of Theorem 1.2 but they are also interesting for themselves.
The next section 3 is a clue part of the paper. It contains a
characterization of transversal isotropy subgroups $S_x^M$ and
isotropy subgroups $F_x^M$, $x\in B_M$, of $\H_G(M)_0$. The
characterization of $F_x^M$ is very delicate and, though
$F_x^M\leq \gau(M)$, it is not valid for $F_x^M$ as subgroups of
the gauge group $\gau(M)$. A demanding  problem is whether it is
possible to obtain a reconstruction result also from the gauge
group $\gau(M)$.  Another version of Theorem 1.2 is formulated in
section 4. In section 5 we generalize Theorem 1.2 to the case of
$G$-action with one orbit type.

It occurs that the proof of Theorem 1.2 cannot be carried over to
the $C^r$ category, $r=1\ld\infty$, without possible essential
changes in whole strategy of the proof. Namely, Lemma 3.8(2) is
obviously no longer true for homeomorphisms of class $C^r$. We can
neither drop the assumption that $M$ and $N$ are topological
manifolds due to the proofs of Lemma 3.9 and Proposition 3.10.

\bigskip

\emph{Acknowledgements}. A part of this work was written when the
second-named author visited Ben Gourion University in January
2010. He thanks for the invitation and hospitality.

\section{Homeomorphism groups related to locally trivial bundles  }
Let $X$ be a regular topological space. For $h\in\H(X)$ we set
$\carr(h):=\{x\in X:\, h(x)\neq x\}$, $\supp(h):=\cl(\carr(h))$
and $\var(h):=\intt(\cl(\carr(h)))=\intt(\supp(h))$. Here, for
$A\s X$, $\cl(A)$ and $\intt(A)$ denote the closure and the
interior of $A$, respectively.

Let $H\leq\H(X)$ be a group of homeomorphisms on $X$. For any open
$U\s X$, denote $H_U=\{h\in H:\,\supp(h)\s U\}$. In particular,
$\H_U(X)=\{h\in \H(X):\,\supp(h)\s U\}$. If $X$ is a paracompact
space, then  $H$ is factorizable in the above sense iff
$\bigcup_{U\in\U}H_U$ generates $H$ for any open cover $\U$ of
$X$.

Recall that an open $U\s X$ is called a \wyr{regular open set} if
$U=\intt(\cl(U))$. Notice  that for any set $A\s X$, the set
$\intt(\cl(A))$ is regular open. It follows from definition that
for any regular open set $U$ and any $h\in\H(X)$, $\carr(h)\s U$
iff $\var(h)\s U$.

Let $\ro(X)$ denote the family of regular open sets of the space
$X$. We endow $\ro(X)$ with the following operations:
$U+V:=\intt(\cl(U\cup V))$, $U\cdot V:=U\cap V$, and
$-U:=\intt(X\setminus U)$. Then $\la\ro(X), +, \cdot, -\ra$ is a
complete Boolean algebra. Clearly, $0^{\ro(X)}=\emptyset$,
$1^{\ro(X)}=X$, and the induced partial ordering of $\ro(X)$ is
$\leq^{\rox}=\s$.

Let $f$ be a homeomorphism between $X$ and $Y$. Then $f$ induces
an isomorphism $f^{\ro}$ between $\rox$ and $\roy$ given by
$f^{\ro}(U)=f(U):=\{f(x)| x\in U\}$. If $X$ is Hausdorff then the
mapping $f\mapsto f^{\ro}$ is an embedding of $H(X)$ into
$\Aut(\rox)$.

 A space-group pair $\la X,H\ra$ is called
a \wyr{local movement system} if for every $U\in\rox$ there is
$f\in H$, $f\neq\id$, such that $\var(f)\s U$. $H$ is then called
a \wyr{locally moving} group of $X$. A starting point in many
reconstruction problems is the following theorem proved by  Rubin
\cite{Ru1}, \cite{Ru3}.

\begin{thm}
Let $\la X_1,H_1\ra$ and $\la X_2,H_2\ra$ be local movement
systems, and $\varphi: H_1\cong H_2$. Then there exists a unique
$\psi:\ro(X_1)\cong\ro(X_2)$ such that $\varphi(f)^{\ro}=\psi\ci
f^{\ro}\ci \psi^{-1}$ for every $f\in H_1$. In other words, for
every $U,V\in\rox$ and $f\in H_1$,
$$
V=f(U)\quad\Leftrightarrow\quad \psi(V)=\varphi(f)(\psi(U)).
$$
\end{thm}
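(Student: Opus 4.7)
The plan is to reconstruct the Boolean algebra $\ro(X_i)$ together with the action of $H_i$ on it, purely from the group structure of $H_i$, in such a way that the given isomorphism $\varphi$ automatically transports the reconstruction from $i=1$ to $i=2$. The central ingredient is a first-order formula $\Phi(f,g)$ in the language of groups such that, for every $f,g\in H_i$, the relation $H_i\models\Phi(f,g)$ holds iff $\var(f)\cap\var(g)=\emptyset$. The formula is built by iterating the following principle, valid in any local movement system: $\var(f)$ and $\var(g)$ are disjoint iff every $h\in H_i$ whose support lies in $\var(f)$ commutes with $g$; the auxiliary clause ``support lies in $\var(f)$'' is in turn expressed group-theoretically via commutation with a sufficient supply of conjugates of $f$, again exploiting the locally moving hypothesis.

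From $\Phi$ one extracts a purely group-theoretic preorder by declaring $f\preceq g$ iff every $h$ satisfying $\Phi(g,h)$ also satisfies $\Phi(f,h)$. The associated equivalence classes inject into $\ro(X_i)$ via $[f]\mapsto\var(f)$, and their image is join-dense in $\ro(X_i)$: indeed, by the local movement hypothesis, every non-empty $U\in\ro(X_i)$ contains $\var(f)$ for some non-trivial $f\in H_i$, and then $U=\bigvee\{\var(f):\var(f)\s U\}$. Consequently $\ro(X_i)$ is identified, canonically up to unique isomorphism, with the Dedekind--MacNeille completion of this partially ordered quotient, which is a purely group-theoretic object.

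Since $\varphi$ preserves $\Phi$ and hence $\preceq$, it descends to an order-isomorphism between the dense subsets $\{[f]:f\in H_1\}$ and $\{[\varphi(f)]:f\in H_1\}$; this extends uniquely to an isomorphism $\psi:\ro(X_1)\cong\ro(X_2)$ of complete Boolean algebras, satisfying $\psi(\var(f))=\var(\varphi(f))$ for every $f\in H_1$. To verify the intertwining $\varphi(f)^{\ro}=\psi\ci f^{\ro}\ci\psi^{-1}$, one checks it on the join-dense family $\{\var(g):g\in H_1\}$ using the identity $f^{\ro}(\var(g))=\var(fgf^{-1})$ together with its counterpart for $\varphi(f)$; preservation of arbitrary joins by both sides then extends the equality to all of $\ro(X_1)$. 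Uniqueness of $\psi$ is automatic from the fact that its values are forced on the join-dense subset $\{\var(f):f\in H_1\}$.

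The main obstacle is the construction of the group-theoretic formula $\Phi(f,g)$: one must express a set-theoretic statement about regular open sets entirely in the language of groups, with no a priori access to $X_i$. This is precisely where the local movement assumption enters essentially, since it supplies, for every regular open set, enough group elements to detect it through commutation patterns. Once $\Phi$ is in place, the rest of the argument--the passage to the preorder, the density, the completion, and the intertwining--is formal and is driven entirely by the fact that $\varphi$ is a group isomorphism.
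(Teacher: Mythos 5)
You should first note that the paper itself offers no proof of this statement: Theorem 2.1 is quoted as a known result of Rubin (\cite{Ru1}, \cite{Ru3}), so the comparison is really with Rubin's original argument. Your overall strategy — interpret the poset $\{\var(f):f\in H\}$ and its $H$-action group-theoretically, observe that by local movement it is dense in the complete Boolean algebra $\ro(X)$, recover $\ro(X)$ as a canonical completion, and transport everything along $\varphi$ using $f^{\ro}(\var(g))=\var(fgf^{-1})$ — is indeed the correct architecture, and your reductions (the preorder $\preceq$ capturing $\var(f)\s\var(g)$, density, the intertwining check on the dense family, uniqueness) are sound modulo details. But there is a genuine gap exactly where you locate "the main obstacle": the group-language formula $\Phi$ is never constructed, and the route you indicate does not work as stated. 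The clause ``$\supp(h)$ lies in $\var(f)$'' cannot be expressed by ``commutation with a sufficient supply of conjugates of $f$'': any $h$ whose variation is disjoint from the variations of all those conjugates also commutes with them, so commutation patterns of this simple kind cannot separate ``supported inside $\var(f)$'' from ``supported far away from $f$ and its conjugates,'' and the attempted definition is in effect circular (it presupposes the very disjointness relation $\Phi$ is meant to define). Overcoming this is the actual content of Rubin's expressibility theorem, whose proof is long and delicate (a genuinely intricate first-order interpretation of $\ro(X)$ in $H$), not a two-line commutation trick; without it your argument reconstructs nothing from the abstract group $H_1$.

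Two secondary points. First, the identification of $\ro(X)$ from the dense subposet should not be via the Dedekind--MacNeille completion: a merely join-dense subposet does not determine a complete lattice through its MacNeille completion (an infinite antichain is join-dense both in a power set algebra and in the antichain with top and bottom adjoined). What you need is the standard fact that a subposet that is dense in the forcing sense in a complete Boolean algebra (below every nonzero element there is a nonzero element of the subposet, which is exactly what local movement provides) determines the algebra as the regular-open (Boolean) completion of that poset; with that replacement the step is fine. Second, your uniqueness claim needs the small observation that $\var(f)$ is recoverable from the Boolean automorphism $f^{\ro}$ alone, e.g.\ as the join of all $U\in\ro(X)$ with $f^{\ro}(U)\cdot U=0$; this is what forces any intertwining $\psi$ to send $\var(f)$ to $\var(\varphi(f))$, after which join-density does give uniqueness.
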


Now, we wish to generalize Theorem 2.1 to the case of locally
trivial bundles.

Let $F$ be a topological space. Recall that a continuous
surjective mapping $\pi:X\r B$ is called a \wyr {locally trivial
bundle with the standard fiber $F$} if the following local
triviality property holds:

There is an open cover $\U$ of $B$ such that for every $U\in\U$
there is a homeomorphism $\a_U: \pi^{-1}(U)\cong U\t F$  such that
$\pr_1\ci\a_U=\pi|_{\pi^{-1}(U)}$ with $\pr_1:U\t F\r U$ being the
canonical projection. We then say that $\pi$ is trivial over $U$.
It follows that for $x\in U$ the map
$\a_U|_{\pi^{-1}(x)}:\pi^{-1}(x)\r\{x\}\t F$ is a homeomorphism.
Observe that $\pi$ is always trivial over a contractible set.

\begin{prop} Let $\pi:X\r B$ be a locally trivial bundle as above
and let $A$ be a subset of $B$. Then
$$\pi^{-1}(\cl(A))=\cl(\pi^{-1}(A))\quad\hbox{ and}\quad
\pi^{-1}(\intt(A))=\intt(\pi^{-1}(A)).$$
 Consequently, if $U\in\ro(B)$
then $\pi^{-1}(U)\in\ro(X)$.
\end{prop}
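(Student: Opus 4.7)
The plan is to reduce everything to two properties of $\pi$: it is continuous (by definition) and it is an open map. Openness of $\pi$ is the only thing that really needs the local triviality hypothesis; once it is in hand, the two set-theoretic identities become a purely general fact about continuous open surjections, and the regular-open statement follows by a one-line calculation.

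First I would verify that $\pi$ is open. This is a local assertion on the source: for any open $W\s X$ and any $y\in\pi(W)$, pick $x\in W$ with $\pi(x)=y$, choose a trivializing neighborhood $U\ni y$ with $\a_U:\pi^{-1}(U)\cong U\t F$ satisfying $\pr_1\ci\a_U=\pi|_{\pi^{-1}(U)}$, and observe that $W\cap\pi^{-1}(U)$ is open in $\pi^{-1}(U)$, so its image under $\a_U$ is open in $U\t F$; since the canonical projection $\pr_1:U\t F\r U$ is an open map, the image $\pi(W\cap\pi^{-1}(U))\s\pi(W)$ is open in $U$, hence in $B$, and contains $y$. Thus $\pi(W)$ is open.

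Next I would prove the two identities. For closure: the inclusion $\cl(\pi^{-1}(A))\s\pi^{-1}(\cl(A))$ is immediate from continuity, since $\pi^{-1}(\cl(A))$ is closed and contains $\pi^{-1}(A)$. For the reverse, take $x\in\pi^{-1}(\cl(A))$ and any open neighborhood $V$ of $x$; then $\pi(V)$ is an open neighborhood of $\pi(x)\in\cl(A)$ by openness of $\pi$, so $\pi(V)\cap A\neq\emptyset$, and any preimage in $V$ of a point of $\pi(V)\cap A$ lies in $V\cap\pi^{-1}(A)$; hence $x\in\cl(\pi^{-1}(A))$. For interior: $\pi^{-1}(\intt(A))$ is open and contained in $\pi^{-1}(A)$, hence sits inside $\intt(\pi^{-1}(A))$. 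Conversely, if $x\in\intt(\pi^{-1}(A))$ choose an open $V$ with $x\in V\s\pi^{-1}(A)$; then $\pi(V)$ is an open set (again by openness of $\pi$) with $\pi(x)\in\pi(V)\s A$, so $\pi(x)\in\intt(A)$, i.e.\ $x\in\pi^{-1}(\intt(A))$.

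Finally I would deduce the consequence: for $U\in\ro(B)$ one has
$$\pi^{-1}(U)=\pi^{-1}(\intt(\cl(U)))=\intt(\pi^{-1}(\cl(U)))=\intt(\cl(\pi^{-1}(U))),$$
using the interior identity followed by the closure identity, so $\pi^{-1}(U)\in\ro(X)$.

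The only real obstacle is the openness of $\pi$; once that is clear, the rest is a standard pair of double-inclusion arguments. I do not anticipate any difficulty beyond correctly assembling the trivialization in that first step.
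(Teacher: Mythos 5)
Your proof is correct, and since the paper dispatches Proposition 2.2 with ``the proof is an easy exercise,'' your argument is precisely the intended one: continuity gives the easy inclusions, and openness of $\pi$ (the only place local triviality is used, consistent with the paper's remark that this hypothesis is indispensable) gives the reverse ones, after which the regular-open claim is the one-line computation you state.
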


The proof is an easy exercise. Notice that the local triviality
property is an indispensable assumption in Proposition 2.2.

Suppose now that two locally trivial bundles $\pi_1:X_1\r B_1$ and
$\pi_2:X_2\r B_2$ with the same standard fiber $F$ are given. A
homeomorphism $f:X_1\r X_2$ is said to be \wyr{projectable} if
there exists a homeomorphism $\tilde f:B_1\r B_2$ such that
$\pi_2\ci f=\tilde f\ci\pi_1$. It follows that such a $\tilde f$
is then uniquely determined.

Now, if a locally trivial bundle $\pi:X\r B$ is given, we define
$\H^{proj}(X):=\{f\in\H(X):\, f\,\hbox{is projectable}\}$, the
group of projectable homeomorphisms of $X$. Denote by $\tilde
\pi:\H^{proj}(X)\r\H(B)$ the mapping induced by $\pi$, i.e.
$\tilde\pi(f)=\tilde f$. We say that a group of homeomorphisms
$H(X)$ of $X$ is \wyr{projectable} if $H(X)\leq\H^{proj}(X)$.

\medskip
{\it Convention}. Under the above, let $H(X)$ be a projectable
group. For any $f\in H(X)$, $\tilde f$ will always denote an
element of $\H(B)$ given by $\tilde f:=\tilde{\pi}(f)$, where
$\tilde\pi:H(X)\r\H(B)$ is the mapping induced by $\pi$. Likewise,
we denote $\tilde{H}(B):=\tilde\pi(H(X))$.
\medskip

We are in a position to formulate a straightforward consequence of
Theorem 2.1. We say that a projectable group $H(X)$ is
\wyr{transversally locally moving}, if for every $U\in\ro(B)$
there is $f\in H(X)$ such that $\var(\tilde f)\s U$ and $\tilde
f\neq\id$.

\begin{thm}
 Let $\pi_1:X_1\r B_1$ and $\pi_2:X_2\r B_2$ be two locally
trivial bundles with the same standard fiber $F$. Assume that
$H(X_i)$, $i=1,2$, are projectable, transversally locally moving
groups, and $\varphi: H(X_1)\cong H(X_2)$. Then there exists a
unique $\psi:\ro(B_1)\cong\ro(B_2)$ such that
$\widetilde{\varphi(f)}^{\ro}=\psi\ci \tilde f^{\ro}\ci \psi^{-1}$
for every $f\in H(X_1)$. In other words, for every
$U,V\in\ro(B_1)$ and $f\in H(X_1)$,
$$
V=\tilde f(U)\quad\Leftrightarrow\quad
\psi(V)=\widetilde{\varphi(f)}(\psi(U)).
$$
\end{thm}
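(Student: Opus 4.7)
My strategy is to descend the hypothesis to the base spaces and then invoke Theorem 2.1 there. First I would define $\tilde\pi_i\colon H(X_i)\to\H(B_i)$, $f\mapsto\tilde f$; uniqueness of the lift makes this a group homomorphism, because $\widetilde{f\ci g}=\tilde f\ci\tilde g$. Write $\tilde H_i:=\tilde\pi_i(H(X_i))$ and $K_i:=\ker\tilde\pi_i$ (the normal subgroup of fibre-preserving elements). Unpacking the transversally locally moving hypothesis gives: for every $U\in\ro(B_i)$, some element of $\tilde H_i$ is not the identity and has $\var$ contained in $U$. In other words, $\langle B_i,\tilde H_i\rangle$ is a local movement system, exactly the input required by Theorem 2.1.

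Second, I would descend $\varphi$ to a group isomorphism $\tilde\varphi\colon\tilde H_1\cong\tilde H_2$ by the formula $\tilde\varphi(\tilde f):=\widetilde{\varphi(f)}$, where $f$ is any representative over $\tilde f$. Well-definedness and bijectivity reduce to the single identity $\varphi(K_1)=K_2$. Granted this, Theorem 2.1 applied to $\langle B_1,\tilde H_1\rangle$, $\langle B_2,\tilde H_2\rangle$ and $\tilde\varphi$ delivers a unique $\psi\colon\ro(B_1)\cong\ro(B_2)$ with $\tilde\varphi(\tilde f)^{\ro}=\psi\ci\tilde f^{\ro}\ci\psi^{-1}$. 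Substituting the definition of $\tilde\varphi$ yields the intertwining formula stated, and uniqueness of $\psi$ is inherited from Theorem 2.1.

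The crux is the identity $\varphi(K_1)=K_2$, i.e., showing that $\varphi$ preserves the normal subgroup of fibre-preserving homeomorphisms. The plan is to characterise $K_i$ intrinsically inside $H(X_i)$. Since $\tilde H_i$ is locally moving on a Hausdorff space its centre is trivial, which suggests $K_i=\tilde\pi_i^{-1}(Z(\tilde H_i))$; turning this into a genuinely intrinsic (and hence $\varphi$-invariant) description requires a support/variance argument resting on Proposition 2.2, which bridges $\ro(X_i)$ and $\ro(B_i)$. The local triviality of $\pi_i$ enters precisely here, guaranteeing enough elements of $K_i$ with prescribed fibre behaviour to single out $K_i$ group-theoretically. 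I expect this descent step to be considerably more delicate than the routine invocation of Theorem 2.1 that it makes possible.
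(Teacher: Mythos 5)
Your reduction has the right shape in its second half: once one knows $\varphi(\ker\tilde\pi_1)=\ker\tilde\pi_2$, the map $\tilde\varphi(\tilde f):=\widetilde{\varphi(f)}$ is a well-defined isomorphism $\tilde H(B_1)\cong\tilde H(B_2)$, the pairs $\la B_i,\tilde H(B_i)\ra$ are local movement systems by the transversal local movement hypothesis, and Theorem 2.1 yields the unique $\psi$ with the stated intertwining property. But the step you yourself call the crux --- an intrinsic, $\varphi$-invariant characterization of $\ker\tilde\pi_i$ --- is not supplied, and the hint you give is circular: since $\ZZ(\tilde H(B_i))$ is trivial, $\tilde\pi_i^{-1}(\ZZ(\tilde H(B_i)))$ is literally the definition of $\ker\tilde\pi_i$ and still refers to $\tilde\pi_i$, i.e.\ to the bundle structure rather than to the abstract group. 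This gap is essential, not a routine verification: from ``projectable and transversally locally moving'' alone the kernel correspondence cannot be deduced. Indeed, take $X_1=X_2=B\times B$ with $\pi_1=\pi_2=\pr_1$ and standard fiber $F=B$, let $H_0\leq\H(B)$ be a nontrivial locally moving group (e.g.\ $\H(B)_0$ for a manifold $B$), put $H(X_i)=\{g\times h:\ g,h\in H_0\}$, and let $\varphi(g\times h)=h\times g$. Both groups are projectable and transversally locally moving, yet $\varphi$ interchanges $\ker\tilde\pi=\{\id\}\times H_0$ with $H_0\times\{\id\}$, and no $\psi$ as in the conclusion can exist: applying the intertwining identity to $f=g\times\id$ would force $g^{\ro}=\id$ for every $g\in H_0$, which is absurd since $B$ is Hausdorff. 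So your plan of extracting the kernel group-theoretically from these hypotheses alone cannot be completed.

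Note also that your logical order is the reverse of the paper's. The paper gives no detailed argument for this theorem (it is presented as a direct consequence of Theorem 2.1), and it records $\varphi:\ker\tilde\pi_1\cong\ker\tilde\pi_2$ \emph{afterwards}, as a corollary of the intertwining statement, whereas your route needs the kernel correspondence as an input. The example above shows that under the bare hypotheses as printed no derivation --- yours or a ``straightforward'' one --- can succeed; in effect either the kernel correspondence itself, or the stronger transversal conditions of Theorem 2.5 (which do hold for the groups $\H_G(M)_0$ actually used in Theorem 3.4, where the heavy lifting is done by the machinery of \cite{Ru1}, \cite{Ru3}, \cite{be-ru}), has to be read into the statement. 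Your instinct that this descent step is where all the difficulty sits is therefore correct, but as it stands the proposal does not yield a proof, and the missing step is not merely delicate but unavailable at this level of generality.
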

Observe that it is not known whether, under the assumptions of
Theorem 2.3, the Boolean algebras $\ro(X_1)$ and $\ro(X_2)$ are
isomorphic.

For any projectable group $H(X)$ we have that $\ker
\tilde\pi=\{f\in H(X):\, \tilde f=\id\}$ is a normal subgroup of
 $H(X)$. Now, if $\varphi: H(X_1)\cong
H(X_2)$ then $\varphi:\ker \tilde\pi_1\cong\ker \tilde\pi_2$. If
$\tilde{H}(B):=\tilde\pi(H(X))$, then $\tilde{H}(B)\leq\H(B)$ and
$H(X)/\ker \tilde\pi\cong\tilde{H}(B)$. As usual, for
$U\in\ro(B)$,  set $\tilde H_U(B):=\{h\in\tilde H(B):\,\var(h)\s
U\}$.

\begin{dff}
Let $H(X)$ be a projectable group of a locally trivial bundle
$\pi: X\r B$. Then $H(X)$ is called \wyr{transversely
factorizable} (resp. \wyr{transversely non-fixing}; resp.
\wyr{transversely transitive}; resp. \wyr{transversely LDC}) if
$\tilde H(B)$ is factorizable (resp. non-fixing; resp. transitive;
resp. LDC). Recall that a homeomorphism group $H(X)$ is
\wyr{locally densely conjugated} or \wyr{LDC} for short (c.f.,
\cite{be-ru}) if for any $U\in\rox$
and $x\in U$ the orbit $H\begin{tabular}{|@{} c @{}|} $U$\\
\hline\end{tabular}(x)$ is somewhere dense.

\end{dff}

\begin{thm}
Let $\pi_1:X_1\r B_1$ and $\pi_2:X_2\r B_2$ be two locally trivial
bundles with  standard fiber $F$ and $\varphi: H(X_1)\cong
H(X_2)$. Suppose that $H(X_i)$, $i=1,2$, are projectable and
fulfil one of the following conditions $(i=1,2)$:
\begin{enumerate}
\item $H(X_i)$ are transversely factorizable and transversely
non-fixing; \item There are $H_i\leq \tilde H(B_i)$ which are
factorizable and non-fixing, and for any $x_i\in B_i$, the orbit
$\tilde H(B_i)(x_i)$ is somewhere dense; \item $H(X_i)$ are
transversely LDC groups.
\end{enumerate}
Then there is a homeomorphism $\tau:B_1\cong B_2$ such that for
every $f\in H(X_1)$, $\,\widetilde{\f(f)}=\tau\ci\tilde
f\ci\tau^{-1}$.
\end{thm}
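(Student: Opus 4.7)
The plan is to reduce Theorem 2.5 to Theorem 2.3 followed by Theorem 1.1 (or its LDC analogue) applied to the induced action on the base.

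\emph{Step 1: Verify Theorem 2.3 is applicable.} I first check that in each of cases (1)--(3), $H(X_i)$ is transversely locally moving, so that Theorem 2.3 applies. In case (1), this follows from the standard fact that on a regular Hausdorff space a factorizable, non-fixing group is locally moving: given nonempty $U\in\ro(B_i)$, pick $x\in U$, use non-fixing to produce $h\in\tilde H(B_i)$ with $h(x)\neq x$, shrink to an open $W\ni x$ with $\cl(W)\s U$ and $h(W)\cap W=\emptyset$, and factorize $h$ along the cover $\{U,\,B_i\setminus\cl(W)\}$ to extract a nontrivial factor supported in $U$. Case (2) reduces to case (1) applied to the subgroup $H_i$, whence $\tilde H(B_i)$ is a fortiori locally moving. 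Case (3) is immediate from the definition of LDC. Theorem 2.3 then produces a unique Boolean isomorphism $\psi:\ro(B_1)\cong\ro(B_2)$ with $\widetilde{\varphi(f)}^{\ro}=\psi\ci\tilde f^{\ro}\ci\psi^{-1}$.

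\emph{Step 2: Descend $\varphi$ to the projections.} Since $B_i$ is Hausdorff, the embedding $\H(B_i)\hookrightarrow\Aut(\ro(B_i))$ is injective, so $\tilde f=\id$ iff $\tilde f^{\ro}=\id$. Combined with the intertwining relation from Step~1 this gives $\varphi(\ker\tilde\pi_1)=\ker\tilde\pi_2$. Hence $\varphi$ descends to a group isomorphism $\bar\varphi:\tilde H(B_1)\cong\tilde H(B_2)$ satisfying $\bar\varphi(\tilde f)=\widetilde{\varphi(f)}$.

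\emph{Step 3: Promote $\bar\varphi$ to a homeomorphism.} In case (1), $\tilde H(B_i)$ is factorizable and non-fixing, so Theorem 1.1 applied to $\bar\varphi$ directly yields $\tau:B_1\cong B_2$ with $\bar\varphi(\tilde f)=\tau\ci\tilde f\ci\tau^{-1}$. In case (2), the Boolean isomorphism $\psi$ transports factorizability and non-fixingness from $H_1$ onto $\bar\varphi(H_1)\leq\tilde H(B_2)$, so Theorem 1.1 applied to the pair $(H_1,\bar\varphi(H_1))$ produces a candidate $\tau$; one then uses the somewhere-dense orbit hypothesis on the full groups $\tilde H(B_i)$, together with the uniqueness clause of Theorem 2.3, to upgrade this $\tau$ to an intertwiner for all of $\tilde H(B_1)$. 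In case (3), an analogous reconstruction theorem for LDC groups (in the spirit of \cite{be-ru}, \cite{Ru3}) applied to $\bar\varphi$ delivers $\tau$.

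I expect the main obstacle to be case (2): the homeomorphism $\tau$ obtained from Theorem~1.1 a priori only intertwines $H_1$ with $\bar\varphi(H_1)$, not the full groups. To close this gap one shows that for every $f\in H(X_1)$ the two homeomorphisms $\widetilde{\varphi(f)}$ and $\tau\ci\tilde f\ci\tau^{-1}$ of $B_2$ induce the same element $\psi\ci\tilde f^{\ro}\ci\psi^{-1}$ of $\Aut(\ro(B_2))$, whence they coincide by injectivity of $\H(B_2)\hookrightarrow\Aut(\ro(B_2))$; the somewhere-dense orbit assumption is exactly what is needed to verify this equality on a set dense enough to propagate from $\bar\varphi(H_1)$ to all of $\tilde H(B_2)$.
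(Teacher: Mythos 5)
Your overall route -- use Theorem 2.3 to get the Boolean isomorphism $\psi$, deduce $\varphi(\ker\tilde\pi_1)=\ker\tilde\pi_2$ from injectivity of $\H(B_i)\hookrightarrow\Aut(\ro(B_i))$, descend to $\bar\varphi:\tilde H(B_1)\cong\tilde H(B_2)$, and then invoke a reconstruction theorem on the base -- is exactly the intended reduction (the paper itself disposes of the theorem by citing \cite{be-ru}). Your Step 1 (factorizable $+$ non-fixing $\Rightarrow$ locally moving, and the LDC case) and Step 2 are fine, and case (1) of Step 3 is a correct direct application of Theorem 1.1. Case (3) is, as in the paper, a citation to the LDC reconstruction results of \cite{be-ru}, which is acceptable.

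The genuine gap is in case (2), and it is not where you locate it. You claim that ``the Boolean isomorphism $\psi$ transports factorizability and non-fixingness from $H_1$ onto $\bar\varphi(H_1)$.'' This is unjustified and essentially begs the question: $\psi$ is only an abstract isomorphism of the regular-open algebras, not (yet) a homeomorphism. Non-fixing is a condition at every \emph{point} of $B_2$, and a point of $B_2$ corresponds under $\psi^{-1}$ to an ultrafilter of $\ro(B_1)$ that need not be a point ultrafilter -- that is precisely what the whole reconstruction problem is about; factorizability quantifies over arbitrary open covers of $B_2$, and the $\psi$-image of a cover of $B_1$ is merely a family of regular open sets with dense union. (By contrast, the condition $\var(h)\s U$ \emph{is} Boolean-expressible, which is why local movability does transport -- but that is all you get.) Consequently Theorem 1.1 cannot be applied to the pair $(H_1,\bar\varphi(H_1))$, and your candidate $\tau$ never gets off the ground. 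Note also that the difficulty you flag -- propagating the intertwining from $H_1$ to all of $\tilde H(B_1)$ -- is actually the easy part: once some $\tau$ intertwines $\bar\varphi|_{H_1}$, the uniqueness clause of Theorem 2.1 (both $\tau^{\ro}$ and $\psi$ intertwine the locally moving pair) forces $\tau^{\ro}=\psi$, and then $\widetilde{\varphi(f)}=\tau\ci\tilde f\ci\tau^{-1}$ for all $f\in H(X_1)$ follows from injectivity of $\H(B_2)\hookrightarrow\Aut(\ro(B_2))$, with no need of the somewhere-dense-orbit hypothesis. That hypothesis is needed exactly at the step you skipped: to show that $\psi$ is induced by a homeomorphism when only a \emph{subgroup} of each base group is factorizable and non-fixing. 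This is one of the specific theorems of \cite{be-ru} that the paper invokes; to make your case (2) correct you must either quote that result or reprove it (e.g., by showing that $\psi$ and the somewhere-dense orbits of $\tilde H(B_i)$ force point filters to go to point filters), rather than transporting the hypotheses through $\psi$.
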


The proof follows immediately from some results in \cite{be-ru}.

\section{Characterizations of isotropy subgroups}

Let $H\leq K$. For any $g\in K$, $\ZZ^H(g):=\{h\in H|gh=hg\}$ is
the centralizer of $h$ in $H$, and $\ZZ^K(H)$ denotes the
centralizer of $H$ in of $K$. In particular, $\ZZ(K)$ is the
center of $K$. Next, $\N^K(H)$ stands for the normalizer of $H$ in
$K$.

Let us start with the following structural theorem, \cite{gle}.
Observe that originally this theorem was proved for
$T_{3\frac{1}{2}}$-spaces.

\begin{thm}
Let $M$ be a connected paracompact manifold, let $G$ be a compact
 Lie group acting freely on $M$. The space of orbits, $B_M$, is a
connected manifold and the orbit map $\pi:M\r B_M$ is a principal
$G$-bundle, where the structure group $G$ acts by right
translations on fibers. Conversely, every principal $G$-bundle is
obtained from such an action.
\end{thm}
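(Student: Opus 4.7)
The plan is to reduce the statement to the slice theorem for compact Lie group actions. The crucial construction is to exhibit, around every $x \in M$, a \emph{local slice}: a set $S_x \ni x$ meeting each nearby orbit in exactly one point and such that the map $G \t S_x \r G \cd S_x$, $(g,s)\mapsto g.s$, is a $G$-equivariant homeomorphism onto an open $G$-invariant neighborhood of $Gx$. Once such slices are available, both assertions of the theorem follow essentially by unwinding the definitions.

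To produce the slice in the smooth setting I would first average any Riemannian metric on $M$ against the normalized Haar measure on $G$ to obtain a $G$-invariant Riemannian metric. Since $G$ is compact the action is proper, and as it is free each orbit $Gx$ is a closed embedded submanifold of $M$ diffeomorphic to $G$ and of constant dimension $\dim G$. I would then form the normal bundle of $Gx$ and use the $G$-equivariant exponential map. Restricting to the fiber $N_x$ over $x$ and to a sufficiently small $\varepsilon$-ball yields $S_x := \exp_x(B_\varepsilon(0)\cap N_x)$, a slice homeomorphic to an open ball in $\R^{\dim M - \dim G}$. For a purely topological manifold one instead invokes Gleason's original cross-section argument, which extracts a local section from a suitable averaging of a continuous map into a Euclidean neighborhood of the identity in $G$.

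Granting the slices, the remaining claims are immediate. The orbit map $\pi:M\r B_M$ is open, and compactness of $G$ forces orbits to be closed and properness to hold, whence $B_M$ is Hausdorff. The restriction $\pi\re S_x : S_x \r B_M$ is a homeomorphism onto an open neighborhood of $\pi(x)$, so $B_M$ is locally Euclidean; paracompactness descends from $M$ through the closed quotient, and connectedness is automatic from $\pi$ being continuous and surjective. The equivariant homeomorphism $G\t S_x \cong \pi^{-1}(\pi(S_x))$ is precisely a local trivialization of $\pi$, with $G$ acting by right translation on the $G$ factor, which produces the principal $G$-bundle structure. The converse direction is definitional: a principal $G$-bundle carries by construction a free fiberwise right $G$-action whose quotient is the base.

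The main obstacle is the slice construction itself. In the smooth case the exponential-map argument is clean, but even there one must check, using compactness of $G$ and the closedness of the orbit, that the equivariant map $G\t S_x \r M$ is injective (and hence a homeomorphism onto its image) for $\varepsilon$ small enough — otherwise points of $S_x$ could accumulate on the orbit from a different $G$-translate, and the putative trivialization would fail. In the topological case, this injectivity step is substantially harder and is the technical heart of Gleason's theorem.
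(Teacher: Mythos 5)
The paper itself offers no proof of this statement: it is imported as a known structural theorem, with a citation to Gleason \cite{gle} (see also Bredon \cite{br}), so there is nothing internal to compare your argument with except that citation. Your slice-theorem sketch is precisely the standard argument behind the cited result in the smooth category: average a metric over Haar measure, use the equivariant exponential map to produce a slice $S_x$ which is an embedded Euclidean ball, check injectivity of $G\times S_x\to M$ for small $\varepsilon$ to get the tube $G\times S_x\cong\pi^{-1}(\pi(S_x))$, and read off both the manifold structure on $B_M$ and the principal bundle structure; the converse is indeed definitional. In that setting your outline is correct, and it is the version the paper actually needs, since its $G$-manifolds are smooth (cf.\ Lemma 3.2, which already presupposes a smooth action).

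In your purely topological branch, however, there is a genuine gap, located exactly where the statement is delicate. Gleason's cross-section theorem does give, for a free action of a compact Lie group on a completely regular space, a local section $S_x$ of the orbit map and hence local triviality of $\pi$; but that $S_x$ is merely a subspace of $M$ mapped homeomorphically onto an open subset of $B_M$ -- it is not exhibited as an open subset of Euclidean space -- so your inference ``$\pi|_{S_x}$ is a homeomorphism onto an open neighborhood of $\pi(x)$, hence $B_M$ is locally Euclidean'' does not follow. Nor can it be repaired from local triviality alone: knowing that $\pi^{-1}(U)\cong U\times G$ is an open subset of a topological manifold does not force $U$ to be a manifold. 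Indeed there are classical non-manifold spaces $X$ (quotients of $\mathbb{R}^3$ by a suitably wild arc) with $X\times\mathbb{R}\cong\mathbb{R}^4$; then $X\times S^1$ is a paracompact connected topological $4$-manifold (being covered by $X\times\mathbb{R}$), the circle acts freely by rotating the second factor, and the orbit space is $X$, which is not a manifold. So for a merely continuous free action the ``$B_M$ is a manifold'' part of the conclusion genuinely needs more than Gleason's theorem (it fails as stated), whereas in the smooth reading your exponential-map slice supplies the Euclidean chart and the argument closes. You should therefore either assume the action smooth (as the paper implicitly does) or weaken the conclusion in the topological case to the principal-bundle statement, which is all that Gleason \cite{gle} provides.
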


Let $M$ be a smooth manifold with $G$-action. A vector field  $X$
on $M$ is called $G$-invariant vector field, if $T\mu_g\ci
X=X\ci\mu_g$ for all $g\in G$, where $\mu_g$ is the left
translation. The symbol $\xg(M)$ will stand for the Lie algebra of
all $G$-invariant vector fields. The following result is
well-known.

\begin{lem}
For any coordinate chart $(U;(x_1\ld x_n))$ of $B_M$ such that
$\pi$ is trivial over $U$, any $ X\in\xg(M)$ assumes the form
\begin{equation*}
X=\sum_{i=1}^nu_i(x_1\ld x_n)\frac{\p}{\p
x_i}+\sum_{j=1}^{m}v_j(x_1\ld x_n)Y_j
\end{equation*}
on $\pi^{-1}(U)$, where $u_i$, $v_j$ are $C^{\infty}$-functions on
$U$ and $(Y_1\ld Y_m)$ is a basis of the Lie algebra $\gg$ of $G$.
\end{lem}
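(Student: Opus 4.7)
The plan is to reduce the statement to a computation on the trivialization $\pi^{-1}(U)\cong U\t G$ furnished by Theorem 3.1, decompose $X$ according to the product structure $T(U\t G)=TU\oplus TG$, and then read off the two required forms by applying $G$-invariance to each summand separately.

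First I would fix a trivialization $\alpha:\pi^{-1}(U)\r U\t G$ conjugating the $G$-action on $M$ to the model action $g\cd(x,h)=(x,gh)$ on $U\t G$ (inserting a $g\mapsto g^{-1}$ twist if needed, to reconcile the right-translation convention of Theorem 3.1 with the left-translation convention used to define $G$-invariance). Transporting $X$ through $\alpha$ and using the given coordinates $(x_1\ld x_n)$ on $U$, one writes
\[
X(x,h)=\sum_{i=1}^n a_i(x,h)\frac{\p}{\p x_i}+Z(x,h),
\]
where each $a_i$ is smooth on $U\t G$ and $Z(x,h)\in T_hG$ is smooth in both arguments. Imposing $T\mu_g\ci X=X\ci\mu_g$ for every $g\in G$: since $\mu_g$ fixes the $U$-factor, the horizontal part yields $a_i(x,gh)=a_i(x,h)$ for all $g$, so each $a_i$ descends to a smooth function $u_i$ on $U$; the vertical part yields $T_hL_g\cd Z(x,h)=Z(x,gh)$, which for fixed $x$ asserts that $h\mapsto Z(x,h)$ is a left-invariant vector field on $G$. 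Identifying $(Y_1\ld Y_m)$ with the corresponding basis of left-invariant vector fields, this pins $Z$ down uniquely as $Z(x,h)=\sum_j v_j(x)Y_j(h)$, and smoothness of $v_j$ in $x$ is immediate from smoothness of the map $x\mapsto Z(x,e)\in\gg$.

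The only genuinely delicate step is the bookkeeping of left versus right conventions: Theorem 3.1 describes the structure-group action as right translation on fibers, whereas $G$-invariance is defined through left translations $\mu_g$, so one must decide correctly whether $(Y_1\ld Y_m)$ is realised as left-invariant or right-invariant vector fields on $G$ and insert the corresponding inversion into the trivialization. Once this matching is done, the remaining verifications reduce to direct coordinate computations and present no further obstacle.
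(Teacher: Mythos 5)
Your argument is correct. Note that the paper itself offers no proof of this lemma -- it is stated as ``well-known'' immediately after Theorem 3.1 -- so there is nothing to compare against: what you give is the standard verification, namely pass to an equivariant trivialization over $U$, use that $T\mu_g$ is block diagonal with respect to $T(U\t G)=TU\oplus TG$, conclude that the horizontal coefficients are $G$-saturated (hence functions of $x$ alone) and that the vertical part is, for each fixed $x$, a translation-invariant vector field on $G$, i.e.\ an element of $\gg$ depending smoothly on $x$. Your explicit handling of the left-versus-right convention (composing the principal-bundle trivialization of Theorem 3.1 with inversion on the $G$-factor, or equivalently using the section $(x,g)\mapsto g.s(x)$) is exactly the bookkeeping needed, and the rest of the computation is sound; the only implicit hypothesis, which is the intended reading of the lemma, is that the trivialization over $U$ is smooth, so that the argument takes place in the $C^\infty$ category.
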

\begin{cor}
Let $M$ be as in Theorem 3.1. Then $\H_G(M)_0$ satisfies
conditions (1), (2) and (3) in Theorem 2.5. Moreover, $\H_G(M)_0$
is transversely transitive.
\end{cor}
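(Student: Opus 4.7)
The plan is to reduce the corollary to one key inclusion, $\diff_c(B_M)_0\s\tilde H(B_M)$, where $\tilde H(B_M):=\tilde\pi(\H_G(M)_0)$; once this is in hand, each of the four assertions drops out of standard facts about the smooth compactly supported diffeomorphism group of a connected manifold. First I would record projectability: any $f\in\H_G(M)_0$ commutes with the $G$-action, hence preserves orbits and descends to a unique $\tilde f\in\H(B_M)$, and the equivariant isotopy witnessing $f\in\H_G(M)_0$ descends to a compactly supported isotopy witnessing $\tilde f\in\H(B_M)_0$.

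For the key inclusion, take $\Psi=\Psi_1\in\diff_c(B_M)_0$ coming from a compactly supported smooth isotopy $\Psi_t$ with time-dependent generator $X_t$. Choose a locally finite cover of $B_M$ by trivializing charts $V_\alpha$ of the principal bundle $\pi$ together with a smooth partition of unity $\{\rho_\alpha\}$ subordinate to it. By Lemma 3.2, on each $\pi^{-1}(V_\alpha)\cong V_\alpha\t G$ the vector field $\rho_\alpha X_t$ lifts to the $G$-invariant field $\rho_\alpha X_t\oplus 0$; extending by zero outside $\pi^{-1}(V_\alpha)$ and summing over $\alpha$ gives $\tilde X_t\in\xg(M)$ with compact support and $\pi_*\tilde X_t=X_t$. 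Its flow $\tilde\Psi_t$ is then a compactly supported $G$-equivariant isotopy with $\pi\ci\tilde\Psi_t=\Psi_t\ci\pi$, so $\Psi=\tilde\pi(\tilde\Psi_1)\in\tilde H(B_M)$.

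Now the four conclusions follow quickly. Transverse transitivity is immediate, since $\diff_c(B_M)_0$ already acts transitively on the connected manifold $B_M$. For condition (2), set $H_i:=\diff_c(B_i)_0$; this is factorizable by the classical smooth fragmentation lemma and non-fixing, while the orbits of $\tilde H(B_i)\supseteq H_i$ are all of $B_i$ and hence somewhere dense. For condition (3), given $U\in\ro(B_M)$ and $x\in U$, choose an open coordinate ball $W\ni x$ with $\cl(W)\s U$; the zero-extension of $\diff_c(W)_0$ sits inside $\tilde H_U$ and acts transitively on $W$, so the orbit of $x$ contains the open set $W$. For condition (1), non-fixing again follows from transitivity, and for transverse factorizability I would refine a given open cover $\U$ of $B_M$ to a cover $\V$ by trivializing charts, invoke topological fragmentation on the paracompact manifold $B_M$ to write any $h\in\tilde H(B_M)$ as $h=g_1\cdots g_k$ with $\supp(g_j)\s V_j\in\V$, and on each chart $V_j$ use the trivial fiberwise lift $\tilde g_j(v,s)=(g_j(v),s)\in\H_G(M)_0$ to conclude $g_j\in\tilde H_{V_j}\s\tilde H_U$ for any $U\in\U$ containing $V_j$.

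The bulk of the work is the lifting in the second paragraph, which becomes mechanical once Lemma 3.2 is applied patch by patch. The one slightly delicate ingredient is the appeal to topological fragmentation on a paracompact manifold for the factorizability part of condition (1), which I would quote from the manifold literature rather than reprove.
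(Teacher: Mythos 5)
Your proposal is correct and follows essentially the same route as the paper: Edwards--Kirby fragmentation for the transverse factorizability in condition (1), and Lemma 3.2 lifts of compactly supported vector fields whose flows produce the factorizable, non-fixing, transitive group downstairs for conditions (2), (3) and the transverse transitivity --- you merely package this as the inclusion $\diff_c(B_M)_0\s\tilde H(B_M)$, while the paper speaks of the group generated by flows of compactly supported $G$-invariant fields. The extra details you supply (partition-of-unity lifting of the generating time-dependent field, and the fiberwise lift of the topological fragments over trivializing charts) are a reasonable filling-in of the paper's terse argument, not a different method.
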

\begin{proof} (1) is a consequence of the fragmentation property for homeomorphisms,
c.f. \cite{ed-ki}.  If $X\in\xg(M)$ with $\supp(X)\s U$ then the
flow of $X$ consists of elements of $\H_G(M)$. Now to show (2) it
suffices to take the group generated by elements of flows of the
above form. This group is also transversally transitive, hence
(3).
\end{proof}

\wyr{Notation}. Let $M$ be a manifold. The symbol $\rob(M)$ stands
for the set of all regular open subsets $U$ of $M$ such that $U$
is an embedded ball. Next, for $x\in M$ denote by $\rob_x(M)$ the
subfamily of $\rob(M)$ of all neighborhoods of $x$.
\medskip

Let $(M,B_M,\pi, G)$ be a principal $G$-bundle over $B_M$. Then
$M$ is uniquely determined by its cocycle of transition functions,
that is a covering $\{U_i\}_{i\in I}$ of $B_M$ by coordinate chart
domains and a collection of mappings $g_{ji}:U_i\cap U_j\r G$ such
that $$\forall x\in U_i\cap U_j \cap U_k,\quad g_{kj}(x).
g_{ji}(x)=g_{ki}(x).$$ Then the maps $(x,g)\mapsto (x,g_{ji}(x).
g)$, $x\in U_i\cap U_j$, $g\in G$, are the transition functions of
$(M,B_M,\pi, G)$  related to $\{U_i\}_{i\in I}$. In particular, we
are given a bundle atlas $\{\phi_i:\pi^{-1}(U_i)\r U_i\t G\}$ such
that we get $(\phi_i\ci\phi_j^{-1})(x,g)=(x,g_{ij}(x).g)$.

Assume that we have two cocycles $g_{ij}$ and $g'_{ij}$ over the
same covering $\{U_i\}_{i\in I}$ of $B_M$ (otherwise we could pass
to a common refinement). Then $g_{ij}$ and $g'_{ij}$ are called
\wyr{cohomologous} if there is a family $\chi_i:U_i\r G$, $i\in
I$, such that $\chi_i(x).g_{ij}(x)=g'_{ij}(x).\chi_j(x)$ for all
$x\in U_{ij}$. Clearly, the principal bundle is uniquely defined
by the cohomology class of its cocycle of  transition functions.

Every principal bundle admits a unique right action $r:P\t G\r P$,
called the \wyr{principal right action}, given by
$\phi_i(r(\phi_i^{-1}(x,g),h))=(x,gh)$. Notice that this is well
defined, since the left and right translation on $G$ commute.


We have the following version of Theorem 2.5.

\begin{thm}
Let  $\f: \H_G(M)_0\cong \H_G(N)_0$. Then there exists a unique
$\psi:\ro(B_M)\cong\ro(B_N)$ such that
$\widetilde{\f(f)}^{\ro}=\psi\ci \tilde f^{\ro}\ci \psi^{-1}$ for
every $f\in \H_G(M)_0$. In other words, for every $U,V\in\ro(B_M)$
and $f\in \H_G(M)_0$,
$$
V=\tilde f(U)\quad\Leftrightarrow\quad
\psi(V)=\widetilde{\f(f)}(\psi(U)).
$$
Moreover, there exists a homeomorphism $\tau:B_M\cong B_N$ such
that for every $f\in \H_G(M)_0$,
$\,\widetilde{\f(f)}=\tau\ci\tilde f\ci\tau^{-1}$.
\end{thm}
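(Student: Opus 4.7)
The plan is to deduce Theorem 3.4 by applying the general bundle results from Section 2 (Theorems 2.3 and 2.5) to the principal $G$-bundle $\pi\colon M\r B_M$ furnished by Theorem 3.1. First one checks that the hypotheses of Theorems 2.3 and 2.5 are satisfied for the groups $H(X_i) := \H_G(M)_0$ and $\H_G(N)_0$, which are projectable because every $G$-equivariant homeomorphism descends to a unique homeomorphism of the orbit space $B_M = M/G$ (the induced homomorphism $\tilde\pi$ from the introduction). Since $\H_G(M)_0$ is projectable, $\f$ respects the natural structure and we may speak of $\widetilde{\f(f)}$.

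Next, I would verify the structural hypotheses. Transverse factorizability and transverse non-fixing (condition (1) of Theorem 2.5) are already granted by Corollary 3.3; the corollary also asserts transverse transitivity, hence transverse LDC as well. What remains for applying Theorem 2.3 is the transversely locally moving condition: given $U\in\ro(B_M)$, I need $f\in\H_G(M)_0$ with $\var(\tilde f)\s U$ and $\tilde f\neq\id$. To produce one, pick a chart $(V;(x_1\ld x_n))$ with $V\s U$ over which $\pi$ is trivial and take a compactly supported function $u\in C^\infty(V)$; by Lemma 3.2 the vector field $X = u(x)\partial/\partial x_1$ (lifted to $\pi^{-1}(V)$ by the local trivialization) extends by zero to a $G$-invariant vector field on $M$ whose flow consists of $G$-equivariant homeomorphisms supported in $\pi^{-1}(V)$ and projects to a nontrivial isotopy on $B_M$ supported in $V\s U$. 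Thus $\H_G(M)_0$ is transversely locally moving.

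With these hypotheses in hand, Theorem 2.3 yields the unique Boolean isomorphism $\psi\colon\ro(B_M)\cong\ro(B_N)$ with $\widetilde{\f(f)}^{\ro}=\psi\ci\tilde f^{\ro}\ci\psi^{-1}$, and Theorem 2.5 (applied via Corollary 3.3) yields a homeomorphism $\tau\colon B_M\cong B_N$ with $\widetilde{\f(f)}=\tau\ci\tilde f\ci\tau^{-1}$ for all $f\in\H_G(M)_0$. Taking the induced isomorphism $\tau^{\ro}\colon\ro(B_M)\cong\ro(B_N)$, one has $\widetilde{\f(f)}^{\ro} = \tau^{\ro}\ci\tilde f^{\ro}\ci(\tau^{\ro})^{-1}$, so by the uniqueness clause in Theorem 2.3 we conclude $\psi = \tau^{\ro}$; in particular the two conclusions of the theorem are compatible.

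The main (and essentially only) point requiring care is the verification that $\H_G(M)_0$ is transversely locally moving, i.e.\ that the projected group $\tilde\pi(\H_G(M)_0)$ is rich enough on $B_M$. This is the place where one uses the smoothness of $M$ and Lemma 3.2: local $G$-invariant vector fields exist in abundance and their flows lie in $\H_G(M)_0$. Everything else is an invocation of Theorems 2.3 and 2.5 and Corollary 3.3. Note that we do not need, nor do we obtain here, any information about how $\f$ acts on the gauge subgroup $\gau(M) = \ker\tilde\pi$; that is the subject of the subsequent sections.
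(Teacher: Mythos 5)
Your proposal is correct and follows essentially the same route as the paper, which disposes of Theorem 3.4 with the one-line remark that $\H_G(M)_0$ and $\H_G(N)_0$ are projectable and satisfy conditions (1)--(3) of Theorem 2.5 (via Corollary 3.3), so that Theorems 2.3 and 2.5 give $\psi$ and $\tau$. Your extra verification that the group is transversely locally moving, using flows of $G$-invariant vector fields from Lemma 3.2, is exactly the mechanism already used in the proof of Corollary 3.3, so it is a welcome but not divergent elaboration.
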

In fact, the groups $\H_G(M)_0$ and $\H_G(N)_0$ are obviously
projectable and fulfill conditions (1), (2) and (3) from Theorem
2.5.

 For $x\in B_M$ we denote by
$$S_x^M:=\{f\in \H_G(M)_0| \tilde f(x)=x\},$$  the \wyr{transversal
isotropy subgroup} of $\H_G(M)_0$ at $x$. Under the notation of
Theorem 3.4, it is clear that $\forall x\in B_M$,
$\f(S_x^M)=S_{\tau(x)}^N$. Next, for $U\in\ro(B_M)$, denote
$$S_U^M:=\{f\in \H_G(M)_0:\, \var(\tilde f)\s U\}.$$ Then
$\f(S_U^M)\s S_{\psi(U)}^N$.

Let $$\gau(M):=\{f\in \H_G(M)_0|\, \tilde f=\id_{B_M}\}$$ be the
subgroup of $\H_G(M)_0$ of all its  \wyr{gauge transformations} of
the bundle $M$. (For the significance of this notion in
mathematics and physics, see, e.g., \cite{mich}). Clearly
$\gau(M)$ is a normal subgroup of $\H_G(M)$, and
$\f(\gau(M))=\gau(N)$.

For $U\in\ro(B_M)$ denote $$F_U^M:=\{f\in \H_G(M)_0:\, \var(f)\s
\pi^{-1}(U)\}.$$ Analogously, $\gau_U(M):=\{f\in \gau(M):\, \var(
f)\s \pi^{-1}(U)\}$.

Let $\{U_i\}_{i\in I}$ be an open covering by elements of
$\rob(B_M)$. Assume that $\phi_i:\pi^{-1}(U_i)\r U_i\t G$ is a
local trivialization on $M$. If $f\in\gau(M)$  then
$f|_{\pi^{-1}(U_i)}$ is identified with a mapping $ f^{(i)}:U_i\r
G$ as follows
\begin{equation} (\phi_i\ci f\ci\phi_i^{-1})(x,g)=(x, f^{(i)}(x).g)\quad\hbox{for}\quad (x,g)\in U\t
G.\end{equation} In particular, for $g\in G$ we define
\begin{equation}
c_i(g):\pi^{-1}(U_i)\r \pi^{-1}(U_i)\quad\hbox{by}\quad(\phi_i\ci
c_i(g)\ci\phi_i^{-1})(x,h)=(x, gh),
\end{equation}
that is $c_i(g)^{(i)}$ is equal to the constant mapping $g$ on
$U_i$. It is obvious that $c_i(g)\in\gau(\pi^{-1}(U_i))$ and it
depends on the choice of $U_i$.

On the other hand, if $g\in\ZZ(G)$ then $c_i(g)$ is independent of
chart and it extends to $c(g)\in\gau(M)$. Moreover,
$c(g)\in\ZZ(\gau(M))$.

 Next, let $h\in S_{U_i}^M$. For every $(x,g)\in U_i\t G$ we
may write $$(\phi_i\ci h\ci\phi_i^{-1})(x,g)=(h_1(x),
h_2(x,g))=(h_1(x), h^{(i)}_2(x).g),$$ where $h^{(i)}_2:U_i\r G$.
Here $h_1$ can be viewed as an element of $\tilde H(B_M)$, and
$h_2$ as an element of $\gau(\pi^{-1}(U_i))$. Define
\begin{equation}H^{(i)}_{U_i}:=\{h\in F_{U_i}^M:\,
h_2^{(i)}=e\}.\end{equation} Then $H^{(i)}_{U_i}\leq F_{U_i}^M$
and $H^{(i)}_{U_i}\cong \H_{U_i}(B_M)$. This definition depends on
the choice of a chart over $U_i$. Moreover, $F_{U_i}^M\cong
H_{U_i}^{(i)}\rtimes \gau_{U_i}(M)$, a semi-direct product. That
is, if $h=(h_1,h_2), k=(k_1,k_2)\in H_{U_i}^{(i)}\times
\gau_{U_i}(M)$ then $h\ci k=(h_1\ci k_1,( h_2^{(i)}\ci k_1).
k^{(i)}_2)$ on $\pi^{-1}(U_i)$.

\begin{rem} In general, the above construction of $ f^{(i)}$ cannot
occur globally. In fact, let $(M,B_M,\pi, G)$ be a principal
$G$-bundle  and let $\l:G\t S \r S$ be a left action of the
structure group $G$ on a manifold $S$. We consider the right
action $r:(M\t S)\t G\r M\t S$ given by $r((u,s),g)=(u.g,
g^{-1}.s)$. Then $M\t_GS$, the space of orbits of the action $r$,
carries a unique manifold structure, and $\bar\pi: M\t_G S\r B_M$
is a locally trivial bundle with the standard fiber $S$. It is
denoted by $M[S,\l]$ and called the \wyr{associated bundle} for
the action $\l$. In particular, for the conjugation action
$\conj:G\t G\ni (g,h)\mapsto ghg^{-1}\in G$ we get the associated
bundle $M[G,\conj]$.

 Let $\C(M,S)^G$ stand for the space of
all mappings $f:M\r S$ which are $G$-equivariant, i.e.
$f(u.g)=g^{-1}.f(u)$ for $g\in G$ and $u\in M$. Then we have a
bijection between $\C(M,S)^G$ and the space of sections of the
associated bundle $M[S,\l]$, see, e.g., \cite{br}, \cite{KM}.

It is well known that the group $\gau(M)$ coincides with the space
of $G$-equivariant mappings $\C(M,(G,\conj))^G$. It follows that
$\gau(M)$ identifies with $\C(B_M\leftarrow M[G,\conj])$, the
space of sections of the associated bundle $M[G,\conj]$.
Consequently the above mappings $f^{(i)}$  do not extend to $B_M$
in general.

\end{rem}

\begin{dff}\begin{enumerate}
\item Let $f\in \H_G(M)_0$ and $U\in\rob(B_M)$. Then $f$ is said
to be \wyr{constant-like over $U$} if there exists  a subgroup
$\SS_U(f)\leq S_U^M$ such that $\tilde\pi(\SS_U(f))$ is transitive
on $U$ and $\SS_U(f)\s \ZZ^{\H_G(M)_0}(f)$. Let $\C_U$ denote the
set of all constant-like elements of $\H_G(M)_0$ over $U$. \item
$f\in\gau(M)$ is called \wyr{globally constant-like} if there
exists an open covering $\U\s\rob(B_M)$ such that
$f\in\bigcap_{U\in\U}\C_U$. By $\C_{M}$ we denote the set of all
globally constant-like elements of $\gau(M)$. \item Let $x\in
B_M$. $f\in \H_G(M)_0$ is called  \wyr{constant-like near $x$} if
there is $U\in\rob(B_M)$ such that $x\in\cl(U)$ and $f\in\C_U$. By
$\SC_{x}$ we denote the set of all constant-like near $x$ elements
of $\H_G(M)_0$.
\end{enumerate}
\end{dff}

Obviously, for any $U\in\rob(B_M)$ we have $\id_M\in\C_M\s\C_U$.
We also have that $c(g)\in\C_M$ for all $g\in\ZZ(G)$. But there
are other elements of $\C_U$ as the following shows.

\begin{prop}
\begin{enumerate}
\item Every $c_i(g)$ extends to an element $\hat c_i(g)$ of
$\gau(M)$ which is constant-like over $U_i$. \item
$\ZZ^{\H_G(M)_0}(\gau(M))=\{c(g)|\, g\in \ZZ(G)\}$. \item For
every $U\in\rob(B_M)$ one has
$\ZZ^{\H_G(M)_0}(\gau(M))\cd\C_U=\C_U$.
\end{enumerate}
\end{prop}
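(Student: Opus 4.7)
\emph{Plan for (1).} Given the chart domain $U_i$ from the atlas, first slightly enlarge it: pick an open $W$ with $\cl(U_i)\s W$ over which the trivialization $\phi_i$ still extends, and a continuous bump $\eta:B_M\r[0,1]$ with $\eta\equiv 1$ on $U_i$ and $\supp(\eta)\s W$ compact. Writing $g=\exp(Y)$ (using surjectivity of $\exp$ onto the identity component of the compact Lie group $G$), define $\hat c_i(g)$ to be $\id$ off $\pi^{-1}(W)$ and, on $\pi^{-1}(W)$ in the coordinates of $\phi_i$, to act by $(x,h)\mapsto(x,\exp(\eta(x)Y)h)$. The rescaling $Y\mapsto tY$ supplies a $G$-equivariant compactly supported isotopy to $\id$, so $\hat c_i(g)\in\gau(M)$, and $\hat c_i(g)=c_i(g)$ on $\pi^{-1}(U_i)$ because $\eta\equiv 1$ there. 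For the witness subgroup, take $\SS_{U_i}(\hat c_i(g)):=\{\Psi:\psi\in\H_{U_i}(B_M)_0\}$, where $\Psi$ is the horizontal lift defined in $\phi_i$ by $(x,h)\mapsto(\psi(x),h)$ and extended by $\id$ outside $\pi^{-1}(U_i)$. Each such $\psi$ satisfies $\eta\ci\psi=\eta$ (since $\eta\equiv 1$ on $U_i$ and $\psi=\id$ off $U_i$), and a short calculation then gives $\Psi\ci\hat c_i(g)=\hat c_i(g)\ci\Psi$. Finally $\SS_{U_i}(\hat c_i(g))\leq S_{U_i}^M$ and its projection coincides with $\H_{U_i}(B_M)_0$, which is transitive on the open ball $U_i$.

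\emph{Plan for (2).} The inclusion $\{c(g):g\in\ZZ(G)\}\s\ZZ^{\H_G(M)_0}(\gau(M))$ is read off the local formula: $c(g)$ acts as $(x,h)\mapsto(x,gh)$, and centrality of $g$ makes it commute with any $(x,h)\mapsto(x,\gamma^{(i)}(x)h)$. Conversely, take $f\in\ZZ^{\H_G(M)_0}(\gau(M))$ and write $f(x,h)=(\tilde f(x),f^{(i)}_2(x)h)$ in a chart. The relation $f\gamma=\gamma f$ translates into $f^{(i)}_2(x)\gamma^{(i)}(x)=\gamma^{(i)}(\tilde f(x))f^{(i)}_2(x)$ for every locally defined gauge $\gamma$. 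If $\tilde f(x_0)\neq x_0$, separate the two points by disjoint neighborhoods and pick a gauge $\gamma$ (constructed through a bump function as in (1)) with $\gamma^{(i)}\equiv e$ near $x_0$ and $\gamma^{(i)}(\tilde f(x_0))\neq e$: evaluating the identity at $x_0$ yields $f^{(i)}_2(x_0)=\gamma^{(i)}(\tilde f(x_0))f^{(i)}_2(x_0)$, a contradiction. Hence $\tilde f=\id$ and $f\in\gau(M)$. Reapplying the identity now with $\gamma^{(i)}$ of arbitrary locally constant value $a\in G$ near $x$ gives $f^{(i)}_2(x)a=af^{(i)}_2(x)$ for all $a\in G$, so $f^{(i)}_2(x)\in\ZZ(G)$ pointwise. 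The remaining task---collapsing the section $x\mapsto f^{(i)}_2(x)$ into a genuine constant of the form $c(g)$---is the main obstacle of the proposition and uses that $f$ is joined to $\id$ by a $G$-equivariant compactly supported isotopy, connectedness of $B_M$, and the identification of $\gau(M)$ with sections of the associated bundle $M[G,\conj]$ recalled in Remark~3.5.

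\emph{Plan for (3).} The inclusion $\C_U\s\ZZ^{\H_G(M)_0}(\gau(M))\cd\C_U$ is trivial (take $z=c(e)=\id$). For the opposite inclusion, fix $z=c(g)$ with $g\in\ZZ(G)$ and $f\in\C_U$ with witness subgroup $\SS_U(f)$. The decisive observation is that $c(g)$ is central not merely in $\gau(M)$ but in the whole of $\H_G(M)_0$: in any chart, left multiplication by the central element $g$ on the $G$-factor commutes with the left multiplication by the $G$-factor $s^{(i)}_2(x)$ of an arbitrary $G$-equivariant $s(x,h)=(s_1(x),s^{(i)}_2(x)h)$. Hence $\SS_U(f)\s\ZZ^{\H_G(M)_0}(c(g))$, so each element of $\SS_U(f)$ commutes with $zf=c(g)f$ as well. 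Since $\SS_U(f)\leq S_U^M$ projects onto a transitive subgroup of $\H(B_M)_0$ over $U$, the same $\SS_U(f)$ witnesses $zf$ as constant-like over $U$, and therefore $zf\in\C_U$.
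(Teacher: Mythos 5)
Parts (1) and (3) of your plan are correct and essentially the paper's own route: for (1) the paper also extends $c_i(g)^{(i)}$ over a slightly larger ball by a bump/chart construction and takes as witness subgroup exactly your horizontal lifts, namely $\SS_{U_i}(\hat c_i(g))=H^{(i)}_{U_i}$ from (3.3); for (3) both you and the paper deduce the claim from (2) together with the observation that $c(g)$, $g\in\ZZ(G)$, is just the translation $\mu_g$ and hence commutes with every $G$-equivariant homeomorphism. (Both your $g=\exp(Y)$ device and the paper's chart argument tacitly require $g$ to lie in the identity component; this is a shared, minor caveat.)

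In (2), however, you stop exactly where the real content lies. After reducing to $\tilde f=\id$ and to $f^{(i)}(x)\in\ZZ(G)$ pointwise, what remains is to show that the central-valued local data $f^{(i)}$ are constant and glue to a single $c(g)$; you only list possible ingredients (the isotopy to the identity, connectedness of $B_M$, sections of $M[G,\conj]$) without an argument. This deferred step cannot be closed with the tools you allow yourself, because commutation with $\gau(M)$ alone does not force constancy: if $\ZZ(G)$ has positive dimension (e.g.\ $G=S^1$), take $Y$ in the Lie algebra of $\ZZ(G)$ and a bump function $\beta$ on $B_M$; the gauge transformation with local datum $x\mapsto\exp(\beta(x)Y)$ is compactly supported, joined to the identity by an equivariant isotopy, takes all its values in $\ZZ(G)$, hence commutes with every element of $\gau(M)$, yet is not constant. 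So neither connectedness of $B_M$ nor the isotopy condition can rescue your plan; the set of elements of $\H_G(M)_0$ commuting merely with $\gau(M)$ is strictly larger than $\{c(g)\}$ in such examples.

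The paper settles precisely this case with a different kind of witness: it tests $f$ against the horizontal subgroups $H^{(i)}_{U_i}\leq\H_G(M)_0$, which are not contained in $\gau(M)$. If $f^{(i)}$ is non-constant on some $U_i$, conjugating by $h\in H^{(i)}_{U_i}$ replaces $f^{(i)}$ by $f^{(i)}\ci h_1^{-1}$, so a suitable $h$ gives $hf\neq fh$; this is how the paper excludes non-constant central-valued $f$ (and, in view of the example above, some use of elements moving points of the base is unavoidable). Your proof of (2) must incorporate this commutation test with non-gauge elements rather than defer it; note also that (3), both in your version and in the paper's, silently relies on (2) delivering the strong conclusion that the centralizer consists of the translations $c(g)$, which are central in all of $\H_G(M)_0$.
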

\begin{proof}
(1) Since $U_i$ is an embedded ball in $B_M$ we may assume that
there are $V_i, W_i\in\rob(B_M)$ such that $\cl(U_i)\s
V_i\s\cl(V_i)\s W_i$. By using that $G$ is a Lie group, a standard
argument using a chart in $G$ (c.f. the proof of 3.9(1) below)
leads to the existence of $\hat c_i(g)^{(i)}:W_i\r G$ extending
$c_i(g)^{(i)}:U_i\r G$ such that $\hat c_i(g)^{(i)}=e$ off $V_i$.
Clearly, $\hat c_i(g)^{(i)}$ corresponds to an element $\hat
c_i(g)\in\gau(M)$ which is constant-like over $U_i$ with
$\SS_{U_i}(\hat c_i(g))=H^{(i)}_{U_i}$ given by (3.3).

(2) The inclusion $\supseteq$ is trivial. Now let $f\neq c(g)$ for
all $g\in\ZZ(G)$. If there are some $i\in I$ and $x\in U_i$ such
that $f^{(i)}(x)\not\in\ZZ(G)$ then we are done. Otherwise, for
some $i\in I$, $f^{(i)}$ is not constant. Then there is $h\in
H^{(i)}_{U_i}$ such that $hf\neq fh$. Therefore, the inclusion
$\s$ holds as well. (3) follows from (2).
\end{proof}

 Observe that $\C_U$ is preserved by the inverse
operator, i.e. $f^{-1}\in\C_U$ whenever $f\in\C_U$. Indeed, we can
take $\SS_U(f^{-1})=\SS_U(f)$.  Observe as well that if
 $f$ is constant-like  over $U$ and $V\s U$ then $f$ is
 constant-like over $V$.

It is important and easy to see that \begin{equation}
\f(\C_U)=\C_{\psi(U)},\quad\f(\C_M)=\C_N\quad\hbox{and}\quad
\Phi(\SC_x)=\SC_{\tau(x)}.
\end{equation}
However, these facts require that $\SS_U(f)\leq S_U^M$ in
Definition 3.7. If we required $\SS_U(f)\leq F_U^M$ then (3.4)
would not hold, since we do not know whether $F_U^M$ is preserved
by $\f$ yet.

For $U\in\rob(B_M)$ we put $\C^{\id}_U:=\{f\in\H_G(M)_0: \,
f=\id\,\hbox{on}\,\pi^{-1}(U)\}$. Then we have \begin{equation}
\C^{\id}_U\s\C_U,\quad
\C_U\cdot\C^{\id}_U=\C_U^{\id}\cdot\C_U=\C_U\quad\hbox{and}\quad
\C_U^{\id}\cap\C_M=\{\id\}. \end{equation} Indeed, the first
follows from the fact that for $f\in\C_U^{\id}$ we may take
$\SS_U(f)=F_U^M$. The second is trivial, and the third follows
from the obvious fact: \begin{equation}
f\in\C_U\,\hbox{and}\,(\exists u\in \pi^{-1}(U)):\,
f(u)=u\quad\Rightarrow\quad f\in\C^{\id}_U, \end{equation} and the
same is true for $U$ replaced by $B_M$. Consequently, for all
$U,V\in\rob(B_M)$ we have:
\begin{equation}
V\s U\quad\Rightarrow\quad
\C_U\setminus\C^{\id}_U\s\C_V\setminus\C^{\id}_V.
\end{equation}

Next, for $x\in B_M$ we set
\begin{equation}
\SC^{\id}_x:=\{f\in\H_G(M)_0:\, (\exists U\in\rob(B_M)), \,
x\in\cl(U)\,\hbox{and}\, f\in\C_U^{\id}\}.
\end{equation}
Analogously as in (3.5) we get
\begin{equation}
\SC^{\id}_x\s\SC_x\quad\hbox{and}\quad
\SC_x^{\id}\cap\C_M=\{\id\}.
\end{equation}

\begin{lem}
\begin{enumerate}\item
Let $f\in\gau(M)$ and let $\U\s\rob(B_M)$ be a finite open
covering of $\pi(\supp(f))$. Then $f$
 can be written as $f=f_1\ldots f_r$, where
$\pi(\supp(\tilde f_i))\s U_i$ for some $U_i\in\U$, $i=1\ld r$.
\item Let $x\in B_M$. If $f\in\gau(M)$ such that $f(u)=u$ for some
(all) $u\in\pi^{-1}(x)$ then there are disjoint open sets $V_1,
V_2\in\rob(B_M)$ such that $x\in\cl(V_1)\cap\cl(V_2)$ and a
decomposition $f=f_1f_2$ with $f_i\in\gau(M)$ and
$f_i\in\C^{\id}_{V_i}$ for $i=1,2$.
\end{enumerate}
\end{lem}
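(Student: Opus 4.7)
\emph{Proof plan.} For (1), we invoke the $G$-equivariant fragmentation property for $\H_G(M)_0$ underlying Corollary 3.3(1): augmenting $\U$ to an open cover of $M$ by adjoining $M\setminus\supp(f)$, Edwards--Kirby fragmentation yields $f=f_1\ldots f_r$ with $f_i\in\H_G(M)_0$ and $\supp(f_i)\s\pi^{-1}(U_i)$; in particular each $\tilde f_i$ is supported in $U_i$. The ``some $\Leftrightarrow$ all'' clause in (2) is immediate from equivariance: if $f(u_0)=u_0$ and the principal action is free on $\pi^{-1}(x)$, then $f(u_0.g)=f(u_0).g=u_0.g$ for every $g\in G$, so $f\re\pi^{-1}(x)=\id$.

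For the decomposition in (2) the plan is to perform the splitting inside one trivializing chart around $x$ and transport the problem down to the Lie algebra $\gg$, where partition-of-unity arguments are available. Pick $U\in\rob(B_M)$ containing $x$ over which $\pi$ is trivial, small enough that the map $\phi:U\r G$ representing $f|_{\pi^{-1}(U)}$ in the trivialization takes values in a normal neighborhood of $e\in G$; this is possible because $\phi(x)=e$ and $\phi$ is continuous. Write $\phi=\exp\ci\xi$ for a continuous $\xi:U\r\gg$ with $\xi(x)=0$. Choose disjoint embedded open balls $V_1,V_2\s U$ belonging to $\rob(B_M)$ with $\cl(V_1)\cap\cl(V_2)=\{x\}$, together with an intermediate open set $U_0$ satisfying $\cl(V_1)\cup\cl(V_2)\s U_0$ and $\cl(U_0)\s U$. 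The sets $\cl(V_1)\setminus\{x\}$ and $(\cl(V_2)\setminus\{x\})\cup(U\setminus U_0)$ are disjoint closed subsets of the normal space $U\setminus\{x\}$, so Urysohn's lemma supplies a continuous $\chi:U\setminus\{x\}\r[0,1]$ equal to $1$ on the first set and $0$ on the second. Put $\xi_2:=\chi\,\xi$ on $U\setminus\{x\}$ and $\xi_2(x):=0$; because $\xi(x)=0$ and $\chi$ is bounded, $\xi_2$ extends continuously to $U$ and is supported in $\cl(U_0)$. Define $\phi_2:=\exp\ci\xi_2$, extended by $e$ outside $U$, and let $f_2\in\gau(M)$ be the associated gauge transformation; the path $t\mapsto\exp(t\xi_2)$ is a compactly supported $G$-equivariant isotopy from $\id$ to $f_2$, hence $f_2\in\H_G(M)_0$. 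On $\pi^{-1}(V_1)$ we have $\chi=1$, so $f_2=f$; on $\pi^{-1}(V_2)$ we have $\chi=0$, so $f_2=\id$. Setting $f_1:=ff_2^{-1}\in\gau(M)$ then yields $f_1\re\pi^{-1}(V_1)=\id$ and $f=f_1f_2$, as required.

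The main obstacle is the non-commutativity of $G$, which forbids any naive splitting of $f$ into pointwise factors supported in $V_1$ and $V_2$: continuity of such factors at points of $\cl(V_1)\cap\cl(V_2)$ would force $f=\id$ on the shared boundary, which we do not have. The Lie-algebra detour circumvents this because the cutoff $\chi$ is allowed its one discontinuity at exactly the point $x$ where the lift $\xi$ vanishes, so that $\chi\,\xi$ is continuous even though $\chi$ is not. It is precisely this mechanism that uses the hypothesis $f\re\pi^{-1}(x)=\id$.
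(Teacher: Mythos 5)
Your part (2) is correct and close in spirit to the paper's argument: both transfer $f$ to a Lie-algebra-valued representative via a chart of $G$ at $e$ (possible exactly because $f^{(i)}(x)=e$) and split it using the linear structure of $\gg$. The paper does the splitting with a conical decomposition of a coordinate cube and an explicit interpolation between the two cones, while you use two tangent balls and a Urysohn cutoff on $U\setminus\{x\}$ multiplied against the lift $\xi$, with $\xi(x)=0$ supplying continuity at $x$; this is a legitimate and arguably cleaner variant. Only make sure $V_1,V_2,U_0$ are chosen with compact closures inside $U$, so that the isotopy $t\mapsto\exp(t\xi_2)$ is compactly supported and $f_2$ really lies in $\gau(M)\leq\H_G(M)_0$.

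Part (1), however, has a genuine gap. The decomposition is needed (and is proved in the paper) \emph{inside the gauge group}: the factors $f_i$ must belong to $\gau(M)$ with $\supp(f_i)\s\pi^{-1}(U_i)$, because Lemma 3.8(1) is invoked in the proof of Proposition 3.9 to show that $\gau(M)$ is generated by the subgroups $\gau_V(M)$ attached to an open cover; factors that are merely in $\H_G(M)_0$ are useless there, and in your reading the hypothesis $f\in\gau(M)$ would play no role at all. An appeal to ($G$-equivariant) Edwards--Kirby fragmentation cannot produce gauge factors, and such an equivariant fragmentation of $\H_G(M)_0$ with factors supported in the saturated sets $\pi^{-1}(U_i)$ is not what the paper establishes either (Corollary 3.3(1) concerns only the projected group on $B_M$). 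The paper instead fragments within $\gau(M)$ by hand: write $f=\bar f_1\cdots\bar f_s$ with each $\bar f_j\in\gau(M)$ so close to the identity that the local representatives $\bar f_j^{(i)}:U_i\r V\s G$ take values in a chart $\chi:V\r\chi(V)\s\gg$ at $e$, and then cut each $\bar f_j$ off with a partition of unity $\{\alpha_i\}$ subordinate to $\U$, e.g.\ $f_1^{(1)}=\chi^{-1}\bigl(\alpha_1\cdot(\chi\bar f_1)^{(1)}\bigr)$ on $\pi^{-1}(U_1)$ and $f_1=\id$ elsewhere, iterating over the cover. Note that this is precisely the mechanism you already employ in part (2) (lift to $\gg$ and multiply by a cutoff), so your proof of (1) can be repaired along those lines; as written, though, it proves a weaker statement than the one the paper needs.
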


\begin{proof}
(1)  Let $f\in\gau(M)$ and let $\pi(\supp(f))\s U_1\cup\ldots\cup
U_l$, where $U_i\in\rob(B_M)$. For any $i=1\ld l$ fix a chart
$\phi_i$ on $M$ over $U_i$. Let $\alpha_i:B_M\r\R$ be such that
$\supp(\alpha_i)\s U_i$, $i=1\ld l$, and $\sum\alpha_i=1$ on
$\pi(\supp(f))$. Fix as well a chart on $G$
$\chi:V\r\chi(V)\s\frak g$, where $V$ is an open neighborhood of
$e$ in $G$, $\frak g$ is the Lie algebra of $G$, and $\chi(V)$ is
an open neighborhood of 0 in $\frak g$. Now we can write $f=\bar
f_1\ldots\bar f_s$, where $\bar f_j$ are so small that for any
$i=1\ld l$ and for any $j=1\ld s$ the mapping $\bar
f_j^{(i)}:U_i\r V\s G$ is well-defined, c.f. (3.1). Define $f_1$
by $f_1^{(1)}=\chi^{-1}(\alpha_1\cd(\chi\bar f_1)^{(1)})$ on
$\pi^{-1}(U_1)$ and $f_1=\id$ off $\pi^{-1}(U_1)$. Now
$\pi(\supp(f_1^{-1}\bar f_1))$ is in $U_2\cup\ldots\cup U_l$
Continuing this procedure we get $\bar f_1=f_1\ldots f_{l}$, and
finally the required decomposition $f=f_1\ldots f_r$.

(2) Choose $U_i\in\rob_x(B_M)$ and let $f\in\gau(M)$ satisfy the
assumption. Arguing as in (1) we may assume that
$f^{(i)}:U_i\r\frak g$. Then $f^{(i)}(x)=0$. If $\dim B_M=1$ the
proof is obvious. Suppose $n=\dim B_M>1$. Denote $y=(y_1\ld
y_n)\in\R^n$. We identify $U_i$ with the subset  $Q\s\R^n$ given
by $Q:=\{y\in\R^n:\, |y_1|<1, |y_2|<1\}$ such that $x$ identifies
with 0. Put $V_1:=\{y\in Q: |y_1|>|y_2|,\,y_1>0\}$ and
$V_2:=\{y\in Q: |y_1|>|y_2|,\,y_1<0\}$. Observe that any $z\in
Q\setminus(V_1\cup V_2\cup\{0\})$ is uniquely written in the form
$z=ty+(1-t)\bar y$, where $t\in[0,1]$, $y\in \p V_1$ and $\bar
y:=(-y_1,y_2\ld y_n)\in\p V_2$. We define $f_1$ by means of
$f^{(i)}_1$ as follows: $f^{(i)}_1=0$ on $\cl(V_1)$,
$f^{(i)}_1=f^{(i)}$ on $\cl(V_2)$, and
$f^{(i)}_1(z)=ty+(1-t)f^{(i)}(\bar y)$. Then $f_1$ and
$f_2:=f_1^{-1}f$ satisfy the claim.
\end{proof}

\begin{prop} Let us
denote $\C^{\ZZ}_U:=\ZZ^{\H_G(M)_0}(\gau(M))\cd\C^{\id}_U$. For
all $U\in\rob(B_M)$ we have $\Phi(\C^{\ZZ}_U)=\C^{\ZZ}_{\psi(U)}$.
\end{prop}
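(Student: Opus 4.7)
The plan is to reduce the equality to a single inclusion. Set $\ZZ_M:=\ZZ^{\H_G(M)_0}(\gau(M))$ and $\ZZ_N:=\ZZ^{\H_G(N)_0}(\gau(N))$. Since $\f(\gau(M))=\gau(N)$, $\f$ sends $\ZZ_M$ isomorphically onto $\ZZ_N$, and therefore
\[
\f(\C^{\ZZ}_U)=\f(\ZZ_M)\cd\f(\C^{\id}_U)=\ZZ_N\cd\f(\C^{\id}_U).
\]
Applying this argument to $\f^{-1}$, it is enough to prove the single inclusion $\f(\C^{\id}_U)\s\C^{\ZZ}_{\psi(U)}$.

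Fix $f\in\C^{\id}_U$. By (3.5) $f\in\C_U$, so by (3.4) $\f(f)\in\C_{\psi(U)}$. Since $\tilde f|_U=\id$, Theorem 3.4 gives $\widetilde{\f(f)}|_{\psi(U)}=\id$, so $\f(f)$ preserves each fiber above $\psi(U)$. Pick a chart $V\s\psi(U)$ with a trivialization $\phi:\pi^{-1}(V)\cong V\t G$ and write $\f(f)\ci\phi^{-1}(y,h)=\phi^{-1}(y,F(y)\cd h)$ for the resulting gauge function $F:V\r G$. The condition $\f(f)\in\C_{\psi(U)}$ together with equivariance forces the conjugacy class of $F(y)$ in $G$ to be independent of $y\in V$; call it $C\s G$. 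By (3.6) and Proposition 3.8(3), as soon as $C\cap\ZZ(G)\neq\emptyset$ — say $g_0\in\ZZ(G)\cap C$ — the element $c(g_0)^{-1}\f(f)$ lies in $\C_{\psi(U)}$ and has a fixed point in $\pi^{-1}(\psi(U))$, hence belongs to $\C^{\id}_{\psi(U)}$, giving $\f(f)\in\C^{\ZZ}_{\psi(U)}$ as required.

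It remains to show $C\s\ZZ(G)$. For this we use that $f$ commutes with every $h\in\gau_U(M):=\{h\in\gau(M):\supp(h)\s\pi^{-1}(U)\}$: such an $h$ is the identity off $\pi^{-1}(U)$ while $f$ is the identity on $\pi^{-1}(U)$ and maps $M\setminus\pi^{-1}(U)$ to itself. Hence $\f(f)$ commutes with every element of $\f(\gau_U(M))\s\gau(N)$. A direct calculation in the trivialization $\phi$ — in the spirit of the proof of Proposition 3.8(2) — shows that a gauge transformation on $\pi^{-1}(V)$ commuting with a family of compactly supported gauge transformations acting transitively on each fiber must take values in $\ZZ(G)$; combined with the constant conjugacy class this forces $F\equiv g_0\in\ZZ(G)$. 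The main obstacle is to verify that $\f(\gau_U(M))$ contains such a rich family of compactly supported gauge transformations on $\pi^{-1}(\psi(U))$. At this stage the algebraic characterization of the isotropy subgroups $F_x^M$ referred to in the introduction is unavailable, so the verification must rely solely on what has already been established: the preserved structures $\ZZ_M,\gau(M),\C_U,\C_M,\SC_x$; the fragmentation in Lemma 3.9; and the transversal transitivity from Corollary 3.3.
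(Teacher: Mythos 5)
Your reduction to the single inclusion $\f(\C^{\id}_U)\s\C^{\ZZ}_{\psi(U)}$ is legitimate, and the final bookkeeping (constancy of the conjugacy class of the gauge function $F$ of $\f(f)$ over $\psi(U)$, and the passage from a central value $g_0$ to $\f(f)\in c(g_0)\cd\C^{\id}_{\psi(U)}$ via (3.6) and Proposition 3.7(3) in the paper's numbering) is sound. But the only nontrivial step --- showing that $F$ actually takes a value in $\ZZ(G)$ --- is exactly what you leave open, and you say so yourself. Your plan is to use that $f$ commutes with $\gau_U(M)$ and then run a pointwise commutation computation against $\f(\gau_U(M))$; for that computation you would need the elements of $\f(\gau_U(M))$ to be gauge transformations of $N$ whose values at points of $\psi(U)$ sweep out enough of $G$, i.e.\ you need control of where, over the base of $N$, the images of elements supported over $\pi^{-1}(U)$ live and how they act on the fibers over $\psi(U)$. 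Nothing established up to this point gives that: Theorem 3.4 only controls the induced action on the base, and gauge transformations are invisible there, so the condition $\supp(h)\s\pi^{-1}(U)$ is not known to be preserved by $\f$ --- this is precisely the delicate localization the paper only achieves later (Lemma 3.12, via $F^M_x$), and that later result itself depends on the present proposition, so no circular appeal is possible. Hence the proposal has a genuine gap at its decisive step.

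The paper's proof sidesteps this by producing an $\f$-invariant characterization of $\C^{\ZZ}_U$ itself. Fixing $W_1, W$ with $\cl(W_1)\s W\s\cl(W)\s U$, a cover $\V\s\rob(B_M)$ of $B_M\setminus U$ avoiding $\cl(W)$, and a family $\G_{\V}=\{h_V:V\in\V\}\leq\H_G(M)_0$ with $\tilde h_V(U)=V$, it proves (3.10): $c\in\C^{\ZZ}_U$ if and only if $\gau(M)$ is generated by $\bigcup_{h\in\G_{\V}}h\ZZ^{\gau(M)}(c)h^{-1}$. The forward direction uses $\gau_U(M)\s\ZZ^{\gau(M)}(c)$, the identity $h^{-1}\gau_V(M)h=\gau_{h^{-1}(V)}(M)$ and the fragmentation Lemma 3.8(1); the converse is proved by constructing, when $c^{(i)}$ takes a non-central value $g$ on $W_1$, an element $f\in\gau_W(M)$ equal to $c_i(g_1)$ over $W_1$ (with $g_1g\neq gg_1$) which is not a product of elements of the conjugated centralizers. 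Since $\gau$, centralizers, and the transversal data ($\psi$, $\tau$, and the images $\f(h_V)$, which again translate $\psi(U)$ onto the sets $\psi(V)$ by Theorem 3.4) are already known to be preserved, condition (3.10) transfers through $\f$, giving $\f(\C^{\ZZ}_U)\s\C^{\ZZ}_{\psi(U)}$ and equality by symmetry. In effect the paper replaces your requirement that $\f(\gau_U(M))$ act richly on the fibers over $\psi(U)$ --- which cannot be verified at this stage --- by a purely group-theoretic generation property of centralizers inside $\gau(M)$; to complete your argument you would need exactly such a substitute, so your route stops one step short of the paper's.
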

\begin{proof}
Take a chart $\phi_i$ with $\dom(\phi_i)=U$. Fix $W,
W_1\in\rob(B_M)$ with $\cl(W_1)\s W\s\cl(W)\s U$ and fix $\V\s
\rob(B_M)$ with $(B_M\setminus U)\s \bigcup\V\s(B_M\setminus
\cl(W))$. Fix as well $\G_{\V}=\{h_V:\, V\in\V\}\leq \H_G(M)_0$
such that $\tilde h_V(U)=V$ for any $V\in\V$, c.f., e.g.,
\cite{hir}. Then for each $c\in\C_U$ we have
\begin{equation}
c\in\C^{\ZZ}_U\quad\Longleftrightarrow\quad \gau(M)\quad\hbox{is
generated by}\quad \bigcup_{h\in\G_{\V}}h\ZZ^{\gau(M)}(c)h^{-1}.
\end{equation}
Indeed, we have  that for any $c\in\C^{\ZZ}_U$ we have
$\gau_U(M)\s\ZZ^{\gau(M)}(c)$. Hence $(\Rightarrow)$ in view of
Lemma 3.8(1) and the equality
$h^{-1}\gau_V(M)h=\gau_{h^{-1}(V)}(M)$. To show $(\Leftarrow)$,
let $c\not\in\C^{\ZZ}_U$. Let $x\in W_1$. We may assume that
$c^{(i)}(x)=g\in G\setminus\ZZ(G)$. Otherwise,
$c=c(g).(c(g)^{-1}.c)\in\C^{\ZZ}_U$, due to Proposition 3.7. Take
$g_1\in G$ such that $g_1g\neq gg_1$. By a similar reasoning to
that in Lemma 3.8 there is $f\in\gau_W(M)$ such that $f=c_i(g_1)$
on $\pi^{-1}(W_1)$. Then $f\not\in\ZZ^{\gau(M)}(c)$ and, by
construction, $f$ cannot be a product of elements of
$\bigcup_{h\in\G_{\V}}h\ZZ^{\gau(M)}(c)h^{-1}$.

It follows from (3.10) and Theorem 3.4 that
$\f(\C^{\ZZ}_U)\s\C^{\ZZ}_{\psi(U)}$, as claimed.
\end{proof}

For $U\in B_M$ denote
$\hat\C^{\id}_U=\{f\in\C^{\id}_U:\;\supp(f)\s\pi^{-1}(B_M\setminus
\cl(U))\}$. Then $\hat\C^{\id}_U\lhd \C^{\id}_U$. The following
result due to the second-named author \cite{Ryb3} will be useful.

\begin{thm} For all $U\in\ro(B_M)$,
$\hat\C_U^{\id}$ is a perfect group, i.e. $\hat\C_U^{\id}$ is
equal to its own commutator subgroup
$[\hat\C_U^{\id},\hat\C_U^{\id}]$.
\end{thm}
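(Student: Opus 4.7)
The plan is to reduce the claim to the perfectness theorem for $\H_G(\cdot)_0$ on free $G$-manifolds established in \cite{Ryb3}. Set $M':=\pi^{-1}(B_M\setminus\cl(U))$; since $\pi^{-1}(\cl(U))$ is $G$-invariant and closed in $M$, $M'$ is an open $G$-invariant submanifold on which $G$ still acts freely, so $M'$ is itself a paracompact free $G$-manifold (if $M'$ is disconnected, the argument is applied component by component and reassembled, since $\H_G(\cdot)_0$ splits as the direct product over the components). The definition of $\hat\C_U^{\id}$ says precisely that each of its elements restricts to a $G$-equivariant homeomorphism of $M'$ with compact support (as $\supp(f)\s M'$ is already compact in $M$) and is the identity off $M'$.

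First I would identify $\hat\C_U^{\id}$ with $\H_G(M')_0$ through extension by the identity. One direction is immediate: any isotopy in $\H_G(M')_0$ extends to $M$ by $\id$ to land in $\hat\C_U^{\id}$. The harder direction, for $f\in\hat\C_U^{\id}$, requires producing a compactly supported equivariant isotopy of $f|_{M'}$ to $\id$ that lives inside $M'$ (the given $\H_G(M)_0$-isotopy may a priori pass over points of $\cl(U)$). For this I would fragment $f$ using Lemma~3.8(1) together with the fragmentation property of equivariant homeomorphisms \cite{ed-ki}, writing $f=f_1\cdots f_r$ with each $f_j$ supported in a trivializing chart $\pi^{-1}(V_j)$ where $\cl(V_j)\s B_M\setminus\cl(U)$; this is possible because $\pi(\supp(f))$ is a compact subset of the open set $B_M\setminus\cl(U)$, so a finite subcover by elements of $\rob(B_M)$ contained in $B_M\setminus\cl(U)$ exists. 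On each trivialization $\pi^{-1}(V_j)\cong V_j\t G$ one produces an isotopy of $f_j$ to the identity within the chart by the standard product-structure argument, and concatenating gives the required isotopy inside $M'$.

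With the identification $\hat\C_U^{\id}\cong\H_G(M')_0$ in place, the cited perfectness theorem of \cite{Ryb3} applied to $M'$ gives $\H_G(M')_0=[\H_G(M')_0,\H_G(M')_0]$ and hence the desired equality $\hat\C_U^{\id}=[\hat\C_U^{\id},\hat\C_U^{\id}]$. The main obstacle is precisely the localization in the identification step; however, if one prefers to bypass it, an equivalent route is to rerun the commutator proof of \cite{Ryb3} verbatim inside $\hat\C_U^{\id}$, noting that the infinite-displacement/doubling constructions typical of such perfectness arguments only require small trivializing balls, all of which can be chosen in $\pi^{-1}(B_M\setminus\cl(U))$, so every commutator produced stays in $\hat\C_U^{\id}$.
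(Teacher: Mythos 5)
Your overall strategy coincides with the paper's: the proof given in the paper is nothing more than the remark that the statement is a special case of Theorem 1.1 of \cite{Ryb3}, i.e.\ perfectness of $\H_G(\cdot)_0$ applied to the open invariant free $G$-manifold $M':=\pi^{-1}(B_M\setminus\cl(U))$, and your easy direction (extension by the identity maps $\H_G(M')_0$ into $\hat\C^{\id}_U$) is exactly what that reading uses. The problem is your ``harder direction''. The fragmentation argument you propose is not available: Lemma 3.8(1) concerns only gauge transformations, whereas a general $f\in\hat\C^{\id}_U$ need not satisfy $\tilde f=\id$; and Edwards--Kirby fragmentation of an element of $\H_G(M)_0$ is subordinate to an open cover of (a neighborhood of) the support of an isotopy joining $f$ to $\id$, not merely of $\supp(f)$ --- the given isotopy may sweep across $\pi^{-1}(\cl(U))$, so you cannot force the fragments to be supported in sets $\pi^{-1}(V_j)$ with $\cl(V_j)\s B_M\setminus\cl(U)$.

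More decisively, the identification $\hat\C^{\id}_U\cong\H_G(M')_0$ you are trying to establish is false in general, so neither this step nor your fallback of rerunning the commutator argument of \cite{Ryb3} inside $\hat\C^{\id}_U$ can be pushed through. Take $G=S^1$ acting on $M=\R^2\t S^1$ by rotations of the second factor, so $B_M=\R^2$, and let $U=\{x\in\R^2:\,|x|<1\ \mbox{or}\ |x|>2\}\in\ro(B_M)$, so that $B_M\setminus\cl(U)=A:=\{1<|x|<2\}$ is an open annulus. Let $f=t\t\id_{S^1}$, where $t$ is a Dehn twist of $\R^2$ supported in a compact subannulus of $A$. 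Then $f\in\H_G(M)_0$ (cross a compactly supported Alexander-type isotopy of $t$ in $\R^2$ with $\id_{S^1}$), $f=\id$ on $\pi^{-1}(U)$ and $\supp(f)\s\pi^{-1}(A)$, so $f\in\hat\C^{\id}_U$; but $f$ is not the extension by $\id$ of an element of $\H_G(\pi^{-1}(A))_0$, since every equivariant homeomorphism of $\pi^{-1}(A)$ is projectable and an equivariant compactly supported isotopy inside $\pi^{-1}(A)$ would project to a compactly supported isotopy in $A$ joining $t|_A$ to the identity, contradicting the fact that the twist represents a generator of $\pi_0$ of the compactly supported homeomorphism group of the open annulus, which is $\mathbb{Z}$. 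The same homomorphism $f\mapsto[\tilde f|_A]\in\mathbb{Z}$ vanishes on commutators, so under the literal definition of $\hat\C^{\id}_U$ the statement itself is in trouble; the intended reading (and the one under which the citation of \cite{Ryb3} is a complete proof) is that elements of $\hat\C^{\id}_U$ are joined to the identity by equivariant isotopies supported in $\pi^{-1}(B_M\setminus\cl(U))$. Under that reading your first paragraph --- extension by the identity, componentwise if $M'$ is disconnected, followed by the perfectness theorem of \cite{Ryb3} --- is the whole argument, and the ``harder direction'' should simply be deleted rather than proved.
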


In fact, it is a special case of Theorem 1.1 in \cite{Ryb3}.

\begin{prop}
For all $U\in\rob(B_M)$ we have
$\Phi(\C^{\id}_U)=\C^{\id}_{\psi(U)}$. Consequently,  for any
$x\in B_M$, $\Phi(\SC^{\id}_x)=\SC^{\id}_{\tau(x)}$.
\end{prop}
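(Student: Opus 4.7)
The plan is to characterize $\C^{\id}_U$ as a canonical subgroup of $\C^{\ZZ}_U$, so that the conclusion follows from the already-established identity $\Phi(\C^{\ZZ}_U)=\C^{\ZZ}_{\psi(U)}$ of Proposition~3.10. First I would check that $\ZZ:=\ZZ^{\H_G(M)_0}(\gau(M))=\{c(g):g\in\ZZ(G)\}$ in fact lies in the center of all of $\H_G(M)_0$: in a local trivialization a $G$-equivariant homeomorphism has the form $(x,h)\mapsto(\tilde f(x),h\cd b(x))$ for some $b:U_i\to G$, and $c_i(g):(x,h)\mapsto(x,gh)$ commutes with it by a direct computation. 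Together with $\ZZ\cap\C^{\id}_U=\{\id\}$, which follows from (3.5) since $\ZZ\s\C_M$, this exhibits $\C^{\ZZ}_U=\ZZ\t\C^{\id}_U$ as an internal direct product with $\ZZ$ central.

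Hence $[\C^{\ZZ}_U,\C^{\ZZ}_U]=[\C^{\id}_U,\C^{\id}_U]\s\C^{\id}_U$, and since $\Phi$ preserves commutator subgroups, Proposition~3.10 yields
\[
\Phi([\C^{\id}_U,\C^{\id}_U])\;=\;[\C^{\ZZ}_{\psi(U)},\C^{\ZZ}_{\psi(U)}]\;\s\;\C^{\id}_{\psi(U)}.
\]
In particular, the quoted theorem on perfectness of $\hat\C^{\id}_U$ gives $\Phi(\hat\C^{\id}_U)=\Phi([\hat\C^{\id}_U,\hat\C^{\id}_U])\s\C^{\id}_{\psi(U)}$. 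To extend this to all of $\C^{\id}_U$, I would consider the group homomorphism $\chi:\C^{\id}_U\to\ZZ(G)$ recording the central component of $\Phi(f)$ in the direct-product decomposition on the $N$-side; since $\ZZ(G)$ is abelian, $\chi$ factors through $\C^{\id}_U/[\C^{\id}_U,\C^{\id}_U]$, so it is enough to prove $\C^{\id}_U$ is perfect. The opposite inclusion $\Phi^{-1}(\C^{\id}_{\psi(U)})\s\C^{\id}_U$ then follows by symmetry, and the first assertion is proved.

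The principal difficulty is exactly this perfectness. The quoted theorem handles $\hat\C^{\id}_U$ only (support strictly off $\pi^{-1}(\cl(U))$), whereas a general element of $\C^{\id}_U$ may have support touching $\pi^{-1}(\partial U)$, and such pieces cannot be fragmented directly via Lemma~3.8 into elements of $\hat\C^{\id}_V$'s. I would tackle this by adapting the displacement argument of \cite{Ryb3} to $\C^{\id}_U$: pieces of $f$ with support strictly inside $\pi^{-1}(B_M\setminus\cl(U))$ are handled by the quoted theorem, while pieces with support approaching $\pi^{-1}(\partial U)$ are written as commutators $[k,\ell]$ with $k,\ell\in\C^{\id}_U$ by shifting the offending support into the interior and back, using Lemma~3.8 to reassemble. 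For the second assertion, the homeomorphism $\tau:B_M\cong B_N$ supplied by Theorem~3.4 realizes $\psi$ and satisfies $\tau(\cl(V))=\cl(\psi(V))$, so $x\in\cl(V)$ iff $\tau(x)\in\cl(\psi(V))$, and $\Phi(\SC^{\id}_x)=\bigcup_{V:\,x\in\cl(V)}\Phi(\C^{\id}_V)=\SC^{\id}_{\tau(x)}$ follows from the first part.
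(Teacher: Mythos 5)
Your set-up is essentially the paper's: you identify $\C^{\ZZ}_U=\ZZ^{\H_G(M)_0}(\gau(M))\cdot\C^{\id}_U$ as an internal product with central first factor, observe that commutators have trivial central component, and reduce the problem to showing that the homomorphism $\chi:\C^{\id}_U\to\ZZ(G)$ recording the central defect of $\Phi(f)$ vanishes. Up to that point the argument is correct (the reference should be to Proposition 3.9 rather than 3.10, a harmless slip). The gap is the step you yourself flag as the principal difficulty: you make the conclusion depend on the perfectness of $\C^{\id}_U$, which is exactly what is \emph{not} available -- Theorem 3.10 covers only $\hat\C^{\id}_U$ -- and your sketch for supplying it is not a proof. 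Lemma 3.8 is a fragmentation statement for elements of $\gau(M)$, whereas $\C^{\id}_U$ contains homeomorphisms moving the base; and ``shifting the offending support'' of an element whose support accumulates on $\pi^{-1}(\p U)$ by manipulations inside $\C^{\id}_U$, all of whose members fix $\pi^{-1}(\cl(U))$ pointwise, is precisely the kind of delicate relative perfectness statement that would amount to reproving Theorem 1.1 of \cite{Ryb3} in a new form. As written, the decisive step is unproved.

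The paper avoids any new perfectness result by a shrinking trick you do not use. Choose $V\in\rob(B_M)$ with $\cl(V)\s U$. Every $f\in\C^{\id}_U$ satisfies $\carr(f)\s\pi^{-1}(B_M\setminus U)\s\pi^{-1}(B_M\setminus\cl(V))$, so $\C^{\id}_U\leq\hat\C^{\id}_V$, and the quoted theorem applies verbatim at level $V$: $\C^{\id}_U\leq[\hat\C^{\id}_V,\hat\C^{\id}_V]\leq[\C^{\ZZ}_V,\C^{\ZZ}_V]$. Running your own commutator argument at $V$ instead of $U$ (using $\Phi(\C^{\ZZ}_V)=\C^{\ZZ}_{\psi(V)}$ and centrality of the factor $\{c(g)\}$) gives $\Phi(\C^{\id}_U)\s\C^{\id}_{\psi(V)}$. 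Combining this with $\Phi(\C^{\id}_U)\s\Phi(\C^{\ZZ}_U)\s\C^{\ZZ}_{\psi(U)}$ and the elementary inclusion $\C^{\id}_{\psi(V)}\cap\C^{\ZZ}_{\psi(U)}\s\C^{\id}_{\psi(U)}$ (a central factor $c(g)$ which is the identity on $\pi^{-1}(\psi(V))$ is trivial) yields $\Phi(\C^{\id}_U)\s\C^{\id}_{\psi(U)}$, and symmetry finishes the first assertion. Your deduction of the second assertion from the first is fine.
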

\begin{proof}
Let us denote $G_1=\C^{\ZZ}_U$,  $H_1=\ZZ^{\H_G(M)_0}(\gau(M))$
and $K_1=\C^{\id}_U$, and analogously $G_2$, $H_2$ and $K_2$ for
$B_N$ and $\psi(U)$. The following situation arises as a result of
Proposition 3.9. Let $\f:G_1\cong G_2$ be a group isomorphism.
Next let $H_i\lhd G_i$ and $K_i\lhd G_i$, $i=1,2$, be such that
$H_i\leq\ZZ(G_i)$, $G_i=H_iK_i$ and $H_i\cap K_i=\{e\}$. Moreover,
we have that $\f(H_1)=H_2$. It follows that any $g\in G_i$ is
written uniquely as $g=hk$ with $h\in H_i$ and $k\in K_i$ for
$i=1,2$. We have to show that $\f(K_1)=K_2$.

For any $k_1\in K_1$ we can write uniquely
$\f(k_1)=\chi(k_1)\psi(k_1)$. That is, for $g_1=h_1k_1$ we have
$\f(g_1)=\f(h_1)\chi(k_1)\psi(k_1)$. Here $\psi:K_1\cong K_2$ is
an isomorphism and $\chi:K_1\r H_2$ is a homomorphism, and both
are uniquely determined by $\f$. Clearly, $\f(K_1)\leq K_2$ iff
$\chi$ is the trivial homomorphism.

Choose arbitrarily $V\in\rob(B_M)$ such that $\cl(V)\s U$. In view
of Theorem 3.10 we have
$\C^{\id}_U\leq[\hat\C^{\id}_V,\hat\C^{\id}_V]=\hat\C^{\id}_V\leq\C^{\id}_V$.
It follows that $\f(\C^{\id}_U)\leq\C^{\id}_{\psi(V)}$. By
Proposition 3.9, $\f(\C^{\ZZ}_U)\leq \C^{\ZZ}_{\psi(U)}$. Since
$\C^{\id}_{\psi(V)}\cap\C^{\ZZ}_{\psi(U)}\leq\C^{\id}_{\psi(U)}$,
the first assertion follows. The second assertion holds in view of
the first.
\end{proof}

The symbol $F_x^M$ stands for the subgroup of $\gau(M)$ such that
$f\in F_x^M$ iff $f(u)=u$ for any (or some) $u\in\pi^{-1}(x)$. In
the proof of the following clue ingredient of the proof of Theorem
1.2 we are able to provide a condition which characterizes the
subgroups $F_x^M$ among other subgroups in $\gau(M)$ and which is
preserved by $\f$.

\begin{lem}
Under the assumptions of Theorem 1.2, let  $x\in B_M$. Then
$\f(F^M_x)=F^N_{\tau(x)}$.
\end{lem}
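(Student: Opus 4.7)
The plan is to characterize $F_x^M$ by a group-theoretic condition in $\H_G(M)_0$ that is manifestly invariant under $\f$. Concretely, I would prove the identity
\begin{equation*}
F_x^M \;=\; \gau(M)\cap \la \SC_x^{\id}\ra,
\end{equation*}
where $\la \SC_x^{\id}\ra$ is the subgroup of $\H_G(M)_0$ generated by the set $\SC_x^{\id}$. Granting this characterization, the lemma is immediate: we already know $\f(\gau(M))=\gau(N)$, while Proposition 3.11 gives $\f(\SC_x^{\id})=\SC_{\tau(x)}^{\id}$, hence $\f(\la \SC_x^{\id}\ra)=\la \SC_{\tau(x)}^{\id}\ra$, and the two displayed characterizations (for $M,x$ and for $N,\tau(x)$) match under $\f$.

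For the inclusion $F_x^M\s \la \SC_x^{\id}\ra$ I would invoke Lemma 3.8(2) directly: any $f\in F_x^M$ fixes $\pi^{-1}(x)$ pointwise, so the lemma produces disjoint $V_1,V_2\in\rob(B_M)$ with $x\in\cl(V_1)\cap\cl(V_2)$ and a factorization $f=f_1f_2$ where $f_i\in\gau(M)\cap\C_{V_i}^{\id}$; in particular each $f_i\in\gau(M)\cap\SC_x^{\id}$. For the reverse inclusion, take any $g\in\SC_x^{\id}$, so $g\in\C_V^{\id}$ for some $V\in\rob(B_M)$ with $x\in\cl(V)$. Choosing a local trivialization $\phi:\pi^{-1}(W)\r W\t G$ over an open neighbourhood $W$ of $x$, every $u=\phi^{-1}(x,g_0)\in\pi^{-1}(x)$ is the limit of $u_n=\phi^{-1}(x_n,g_0)\in\pi^{-1}(W\cap V)$ for any sequence $x_n\in W\cap V$ with $x_n\r x$ (such a sequence exists since $x\in\cl(V)$). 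Since $g(u_n)=u_n$, continuity forces $g(u)=u$; the same holds for $g^{-1}$. Thus every element of $\la \SC_x^{\id}\ra$ fixes $\pi^{-1}(x)$ pointwise, and intersecting with $\gau(M)$ yields $F_x^M$.

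The substantive step is Lemma 3.8(2), whose combinatorial decomposition of a gauge transformation fixing $\pi^{-1}(x)$ into two pieces supported in complementary ``wedges'' around $x$ is exactly what permits the characterization; its failure in the smooth category is what the authors warn about in the introduction, and it also explains why the characterization must be stated inside the ambient group $\H_G(M)_0$ rather than internally in $\gau(M)$, since $\SC_x^{\id}$ genuinely uses non-gauge elements of $\H_G(M)_0$. Once Lemma 3.8(2) and Proposition 3.11 are at hand, the argument above is essentially formal.
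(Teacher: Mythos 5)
Your proposal is correct, and it takes a genuinely different (and leaner) route than the paper. The paper does not exhibit $F_x^M$ as a subgroup generated by invariant data; instead it characterizes $F_x^M$ as the unique member of a family $\F_x^M$ of subgroups of $\gau(M)$ singled out by four conditions: being normalized by $S_x^M$, a coset-type condition ``every $h\in\gau(M)\setminus F$ is of the form $cf$ with $f\in F$, $c\in\C_U\setminus\C_U^{\id}$'' (which requires Proposition 3.7(1) on extending the constants $c_i(g)$), disjointness from $\SC_x\setminus\SC_x^{\id}$ (which requires (3.6)), and minimality; Lemma 3.8(2) is then used, exactly as in your argument, to show that any subgroup satisfying the second and third conditions must contain $F_x^M$, so that $\F_x^M=\{F_x^M\}$, and invariance of the four conditions under $\f$ is deduced from (3.4), Proposition 3.11 and Theorem 3.4. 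Your identity $F_x^M=\gau(M)\cap\la\SC_x^{\id}\ra$ short-circuits this: the inclusion $F_x^M\s\la\SC_x^{\id}\ra$ is precisely Lemma 3.8(2), and the reverse inclusion is immediate since any $g\in\C_V^{\id}$ with $x\in\cl(V)$ is the identity on $\cl(\pi^{-1}(V))=\pi^{-1}(\cl(V))\supseteq\pi^{-1}(x)$ (Proposition 2.2 -- your sequence argument in a local trivialization proves the same thing), while for a gauge transformation fixing one point of a fibre is equivalent to fixing the whole fibre by equivariance. Since $\f(\SC_x^{\id})=\SC_{\tau(x)}^{\id}$ (Proposition 3.11) and $\f(\gau(M))=\gau(N)$, the characterization transfers verbatim and yields the lemma. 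What your version buys is economy: you avoid the normalizer condition, the sets $\C_U$ and $\SC_x$, and the minimality argument altogether, relying only on the two ingredients that both proofs genuinely need, namely the wedge decomposition of Lemma 3.8(2) and the $\f$-invariance of $\C_U^{\id}$, $\SC_x^{\id}$ from Proposition 3.11; the paper's more elaborate characterization carries extra structural information (e.g.\ the interaction of $F_x^M$ with $S_x^M$ and with the constant-like elements) that is not needed for this particular conclusion. Your closing remark that the characterization ``must'' be stated in the ambient group is only partly apt -- the generators supplied by Lemma 3.8(2) are themselves gauge elements, and one could equally write $F_x^M=\la\SC_x^{\id}\cap\gau(M)\ra$ -- but this is a side comment and does not affect the proof.
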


\begin{proof}

 Fix $U\in\rob_x(B_M)$.
We define a family of subgroups of $\gau(M)$ as follows:
$$
\F_x^M:=\{F\leq\gau(M):\, (1), (2), (3)\,\hbox{ and }\,(4)\,\hbox{
below hold }\},$$ where
\begin{enumerate}
\item $S_x^M\leq \N^{\H_G(M)_0}(F)$; \item for any
$h\in\gau(M)\setminus F$, there are $f\in F$ and
$c\in\C_U\setminus\C^{\id}_U$ such that $h=cf$; \item
$(\SC_x\setminus\SC_x^{\id})\cap F =\emptyset$ (c.f. Def.3.6(3),
(3.8) and (3.9)); \item $F$ is a minimal subgroup of $\gau(M)$
satisfying (1), (2) and (3).
\end{enumerate}

 Clearly $F_x^M$ satisfies (1). It satisfies also (2) in view of
 Proposition 3.7(1). Finally, $F_x^M$ fulfills (3) by (3.6).

Now, assume that $F\in\F_x^M$. Let $f\in F^M_x$. Then, due to
Lemma 3.8(2), there are disjoint open sets $V_1, V_2\in\rob(B_M)$
such that $x\in\cl(V_1)\cap\cl(V_2)$ and a decomposition
$f=f_1f_2$ with $f_i\in\gau(M)$ and $f_i\in\C^{\id}_{V_i}$ for
$i=1,2$. If, e.g., $f_1\not\in F$ then by (2) and (3.7) there
exist $\bar f\in F$ and $c\in\C_{V_1}\setminus\C^{\id}_{V_1}$ with
$f_1=c\bar f$. This contradicts (3), since $c^{-1}.f_1=\bar
f\in(\SC_x\setminus\SC_x^{\id})\cap F$. Therefore $f_1,f_2\in F$
and $f\in F$. Thus, if $F$ satisfies (2) and (3) then $F_x^M\leq
F$. It follows that $\F_x^M=\{F_x^M\}$. In view of (1), (3.4),
Proposition 3.11 and Theorem 3.4 we have that
$\f(F_x^M)=F_{\tau(x)}^N$.
\end{proof}

For $x\in B_M$  denote $\gau_x(M):=\{h|_{\pi^{-1}(x)}:\,h\in
\gau(M)\}\leq\H(\pi^{-1}(x))$. Notice that $\gau_x(M)\cong G$, see
(3.1), however an isomorphism is not canonical. There is neither a
canonical isomorphism $\pi^{-1}(x)\cong\gau_x(M)$, since otherwise
$\pi^{-1}(x)$ would admit a canonical group structure. However as
a consequence of Lemma 3.12 we have
\begin{cor}
 For any $x\in B_M$, there is a
well-defined group isomorphism $\f(x):\gau_x(M)\cong
\gau_{\tau(x)}(N)$ induced by $\f$.
\end{cor}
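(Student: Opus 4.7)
The plan is to realize $\gau_x(M)$ as a quotient of $\gau(M)$ and then descend $\f$ to the quotient using Lemma 3.12.

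First I would observe that the restriction map
$$\rho_x:\gau(M)\r \H(\pi^{-1}(x)),\qquad \rho_x(h):=h|_{\pi^{-1}(x)},$$
is a group homomorphism whose image is, by the very definition of $\gau_x(M)$, all of $\gau_x(M)$, and whose kernel is precisely $F_x^M$ (by the definition of $F_x^M$ given just before Lemma 3.12). The first isomorphism theorem then yields
$$\gau_x(M)\cong \gau(M)/F_x^M,$$
and the analogous identification at $\tau(x)\in B_N$ reads $\gau_{\tau(x)}(N)\cong \gau(N)/F^N_{\tau(x)}$.

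Next I would use that $\f$ restricts to an isomorphism $\f:\gau(M)\cong\gau(N)$ (this was already noted in the discussion following the definition of $\gau(M)$) and that $\f(F_x^M)=F^N_{\tau(x)}$ by Lemma 3.12. Hence $\f$ descends to an isomorphism of the quotients $\gau(M)/F_x^M\cong \gau(N)/F^N_{\tau(x)}$, which, under the identifications of the previous paragraph, becomes the sought isomorphism $\f(x):\gau_x(M)\cong \gau_{\tau(x)}(N)$. Explicitly, for $h\in\gau(M)$ one sets
$$\f(x)\bigl(h|_{\pi^{-1}(x)}\bigr):=\f(h)|_{\pi^{-1}(\tau(x))}.$$
Lemma 3.12 is exactly what is needed to check that this is well defined: if $h_1|_{\pi^{-1}(x)}=h_2|_{\pi^{-1}(x)}$, then $h_1h_2^{-1}\in F_x^M$, hence $\f(h_1)\f(h_2)^{-1}=\f(h_1h_2^{-1})\in F_{\tau(x)}^N$, so $\f(h_1)$ and $\f(h_2)$ agree on $\pi^{-1}(\tau(x))$. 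The symmetric argument applied to $\f^{-1}$ provides injectivity, while surjectivity is immediate from $\f(\gau(M))=\gau(N)$.

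There is no real obstacle at this stage; the whole substance of the statement has been packed into Lemma 3.12. The corollary itself is essentially a bookkeeping step, transferring a group isomorphism to quotients by a pair of corresponding normal subgroups.
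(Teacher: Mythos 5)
Your proof is correct and follows exactly the route the paper intends: the paper states the corollary as an immediate consequence of Lemma 3.12, and your argument (identify $\gau_x(M)$ with $\gau(M)/F_x^M$ via the restriction map, use $\f(\gau(M))=\gau(N)$ and $\f(F_x^M)=F_{\tau(x)}^N$ to descend $\f$ to the quotients) is precisely the bookkeeping the authors leave implicit.
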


\section{Proof of Theorem 1.2 and further remarks}
Theorem 1.2(1) coincides with Theorem 3.4.

To prove the claims (2) and (3) we adopt the notation from section
3. Let $x\in U_i$ and let $\phi_i:\pi^{-1}(U_i)\r U_i\t G$ be a
local trivialization. Then we have the $G$-equivariant
identification $\phi_i^x:\pi^{-1}(x)\cong G$. On the other hand,
we have the canonical isomorphism $\kappa_G:G\ni
g\mapsto\mu_g\in\gau(G)$. It is easily checked that for $g\in G$
$\kappa_G(g)$ under the identification $\phi_i^x$ writes as
$$ \bar\phi_i^x(g):\pi^{-1}(x)\ni u\mapsto
(\phi^x_i)^{-1}(g).\phi^x_i(u)\in\pi^{-1}(x).$$ It follows that
given another local trivialization  $\phi_j:\pi^{-1}(U_j)\r U_j\t
G$ with $x\in U_j$ we get $\bar\phi^x_j(g)=\bar\phi^x_i(g)$. In
fact, we have $\phi_j^x=\mu_{g_{ji}}\ci\phi^x_i$. Hence
$(\phi_j^x)^{-1}=(\phi^x_i)^{-1}\ci\mu_{g_{ij}}$ and we get
\begin{align*}\bar\phi^x_j(g)(u)&=(\phi^x_j)^{-1}(g).\phi^x_j(u)=(\phi^x_i)^{-1}(g).g_{ij}.\phi^x_j(u)\\
&=(\phi^x_i)^{-1}(g).\phi^x_i(u)=\bar\phi^x_i(g)(u).\\
\end{align*}
We also have $\bar\phi^x_i(gh)=\bar\phi^x_i(g)\ci\bar\phi^x_i(h)$
for $g,h\in G$. Therefore the identification
$\phi_i^x:\pi^{-1}(x)\cong G$ can be applied to define the
isomorphism $$\bar\phi^x_i:G\cong \gau_x(M)$$ independently of
$i$. It follows the existence of a mapping $\bar\f:B_M\r \Aut(G)$
defined by means of $\f(x)$, $x\in B_M$, namely given by $$ \bar
\f(x)=(\bar\phi^{\tau(x)}_j)^{-1}\ci\f(x)\ci\bar\phi^x_i.$$
 Moreover,
for $U=U_i$ we obtain desired isomorphisms
$$\phi_U:\Sect(\pi^{-1}(U))\cong \gau(\pi^{-1}(U)),$$
$$\phi_{\tau(U)}:\Sect(\pi^{-1}(\tau(U)))\cong
\gau(\pi^{-1}(\tau(U))),$$ and a bijection
$\sigma_U:\pi^{-1}(U)\cong\pi^{-1}(\tau(U))$ (defined obviously by
$\bar\f(x)$, $x\in U$) such that the claim (3) holds true. Observe
that unless the principal $G$-bundle $\pi:M\r B_M$ is globally
trivial we cannot identify $\Sect(M)$ with $\gau(M )$.

It remains to show that $\bar\f:B_M\r\Aut(G)$ is continuous (the
situation is symmetric). It suffices to do this locally. Let $x\in
U\in\ro(B_M)$. Arguing by contradiction, let $x_n\r x$ in $U$,
$n=1,2\ldots$, be such that $\bar\f(x_n)\in\Aut(G)$ does not
converge to $\bar\f(x)$. Then there exist $K, V\s G$, where $K$ is
closed and $V$ is open, such that $\bar\f(x)\in \N(K,V)$ (i.e.
$\bar\f(x)(K)\s V$) and $\bar\f(x_{n_m})\not\in \N(K,V)$,
$m=1,2,\ldots$, c.f. \cite{bour}. Hence there are $k_m\in K$ with
$\bar\f(x_{n_m})(k_m)\not\in V$, $m=1,2,\ldots$. Put $k=\lim_mk_m$
passing possibly to a subsequence. By using a chart on $G$ at $k$
and applying Tietze's theorem we can find a continuous mapping
$f:U\r G$ such that $f(x)=k$ and $f(x_{n_m})=k_m$ for $m\geq m_0$.
Thus we get that $\ev\ci(\bar\f, f)(x_{n_m})\not\in V$ and
$\ev\ci(\bar\f, f)(x)\in V$ where $\ev$ is the evaluation map.
This contradicts the fact that $\bar\f\ci f=\f(f)\ci\tau$ is
continuous.

This completes the proof of Theorem 1.2.

\begin{rem}
(1) Consider the "generic" case $B_M=\{x\}$, $B_N=\{x\}$ i.e.
$M=N=G$. Then $\H_G(M)\cong G$ canonically and $\f:G\cong G$ is an
automorphism. The "resulting" $\sigma$ is equal to $\f$ and is not
an equivariant mapping. Also the condition $\f (h)=\sigma \circ h
\circ \sigma^{-1}$ from Theorem 1.1 cannot occur. That is, the
reconstruction in the category of $G$-spaces is a specific one and
it is not completely analogous to that in the topological
category.

(2) Let $M=B_M\t G$ and $N=B_N\t G$. Given $\tau:B_M\cong B_N$ and
$\bar\f:B_M\r\Aut(G)$ there is an isomorphism
$\f:\H_G(M)_0\cong\H_G(N)_0$ induced by $\tau$ and $\bar\f$ in
such a way that (1) and (2) in Corollary 1.3 are satisfied.
\end{rem}

Let us formulate another version of Theorem 1.2 with analogous
proof (see Theorem 2.5).

\begin{thm} Let $G$ be a compact  Lie group
acting freely  on paracompact connected manifolds $M$ and $N$.
Suppose that
\begin{enumerate}
\item $H(M) \leq\H_G(M)$ and $H(N)\leq\H_G(N)$ are transversely
factorizable, transversely transitive (c.f. Def. 2.4);  \item
$\ker \tilde{\pi}=\gau(M)$, where $\tilde\pi:H(M)\r\H(B_M)$ is
induced by $\pi$, and the same is true for $H(N)$; and \item for
any $U\in\ro(M)$ the group $\hat H_U(M):=\{h\in
H(M):\,\supp(h)\s\pi^{-1}(U)\}$ is a perfect group.
\end{enumerate}
 If there is a group isomorphism $\f:H(M)\cong H(N)$ then the claims analogous to (1), (2) and (3) in
 Theorem 1.2 are fulfilled. Moreover, if $M$ and $N$ are globally trivial then also
 assertions analogous to those from Corollary 1.3 hold.
\end{thm}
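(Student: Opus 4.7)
The plan is to mirror the proof of Theorem 1.2 step by step, using each of hypotheses (1)--(3) exactly where the corresponding concrete property of $\H_G(M)_0$ was invoked in Sections~3 and 4. First, hypothesis (1) guarantees that $H(M)$ and $H(N)$ are projectable and satisfy condition (1) of Theorem 2.5, so Theorem 2.5 produces a homeomorphism $\tau:B_M\cong B_N$ with $\widetilde{\f(f)}=\tau\ci\tilde f\ci\tau^{-1}$ for every $f\in H(M)$, which is the analog of claim (1) of Theorem 1.2. Hypothesis (2) then identifies $\gau(M)=\ker\tilde\pi$ as an algebraic invariant of $H(M)$, so $\f(\gau(M))=\gau(N)$.

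Next I would replay Section 3 inside $H(M)$ and $H(N)$, reading each occurrence of $\H_G(M)_0$ (resp.\ $\H_G(N)_0$) as $H(M)$ (resp.\ $H(N)$). The definitions of $S_U^M$, $F_U^M$, $\C_U$, $\C^{\id}_U$, $\SC_x$, $\SC^{\id}_x$, $\C^{\ZZ}_U$ and $F_x^M$ transfer verbatim. Proposition 3.7 and Lemma 3.8 rely only on Lie-group chart calculations inside $\gau(M)$ together with the fragmentation of $\gau(M)$, hence pass through once we know $\gau(M)\leq H(M)$ is rich enough --- which follows from hypothesis (2) combined with the transverse factorizability in (1). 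Proposition 3.9 likewise survives: the orbit family $\G_\V$ used to conjugate a chart around $B_M$ exists thanks to the transverse transitivity in (1).

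The decisive step is the analog of Proposition 3.11, whose original proof invoked the perfectness of $\hat\C^{\id}_U$ via Theorem 3.10. Here I would use hypothesis (3) in a slightly different form: given $V\in\rob(B_M)$ with $\cl(V)\s U$, set $W:=B_M\setminus\cl(V)$. Any $f\in\C^{\id}_U$ equals $\id$ on $\pi^{-1}(U)\supset\pi^{-1}(\cl(V))$, so $\supp(f)\s M\setminus\pi^{-1}(\cl(V))=\pi^{-1}(W)$ and thus $f\in\hat H_W(M)$. Since $\hat H_W(M)$ is perfect by (3), $f$ is a product of commutators of elements of $\hat H_W(M)\leq\C^{\id}_V$, supplying the inclusion $\C^{\id}_U\leq[\C^{\id}_V,\C^{\id}_V]$ needed in the original argument. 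The remainder --- splitting $\f$ on $\C^{\ZZ}_U=H_1K_1$ into a central part $\chi$ and a $K$-part $\psi$ --- is unchanged, giving $\f(\C^{\id}_U)=\C^{\id}_{\psi(U)}$. Lemma 3.12, which characterizes $F_x^M$ as the unique minimal subgroup of $\gau(M)$ satisfying its purely group-theoretic conditions (1)--(3), then follows verbatim, yielding $\f(F_x^M)=F^N_{\tau(x)}$.

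From the correspondences $\f(S_x^M)=S^N_{\tau(x)}$ and $\f(F_x^M)=F^N_{\tau(x)}$ the isomorphism $\f(x):\gau_x(M)\cong\gau_{\tau(x)}(N)$ is well-defined exactly as in Corollary 3.13, and the Section 4 construction of $\bar\f:B_M\r\Aut(G)$, of the local isomorphisms $\phi_U,\phi_{\tau(U)}$, and of the intertwining maps $\sigma_U$, together with the Tietze-based continuity argument for $\bar\f$, transfers without change and gives the analogs of claims (2) and (3). The globally trivial case recovers the analog of Corollary 1.3. The main technical obstacle is the third paragraph above: confirming that the different form of perfectness in (3) --- perfectness of $\hat H_W(M)$ rather than of $\hat\C^{\id}_U$ --- still forces $\f$ to preserve the ``identity on $\pi^{-1}(U)$'' condition through the containment chain $\C^{\id}_U\leq\hat H_W(M)\leq\C^{\id}_V$.
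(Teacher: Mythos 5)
Your proposal follows the same route the paper intends: the paper gives no separate proof of this theorem, saying only that it is proved analogously to Theorem 1.2 (with Theorem 2.5 supplying the base homeomorphism $\tau$), and your walkthrough assigns each hypothesis to exactly the step where the corresponding property of $\H_G(M)_0$ was used in Sections 3 and 4, so it is correct and essentially the paper's argument. The one ``technical obstacle'' you flag is in fact vacuous: for $W=B_M\setminus\cl(V)$ the group $\hat H_W(M)$ is precisely $\hat\C^{\id}_V$ computed inside $H(M)$ (support in $\pi^{-1}(W)$ already forces the map to be the identity on $\pi^{-1}(\cl(V))$), so hypothesis (3) is literally the analogue of Theorem 3.10 re-indexed by the complementary regular open set, and the argument of Proposition 3.11 goes through unchanged.
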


\section{The case of $G$-manifolds with one orbit type}

Recall basic facts on the $G$-spaces with one orbit type (see,
Bredon \cite{br}, section II, 5). Let $G$ a compact Lie group and
let $X$ be a $T_{3\frac{1}{2}}$ $G$-space with one orbit type
$G/H$ (that is, all isotropy subgroups are conjugated to $H$). Set
$N=\N^G(H)$ and $X^H=\{x\in X: h.x=x\,,\forall h\in H\}$. Then we
have the homeomorphism $G\t_NX^H\ni[g,x]\mapsto g(x)\in X$. That
is, the total space of the bundle over $G/N$ with the standard
fiber $X^H$ associated to the principal $N$-bundle $G\r G/N$ is
$G$-equivalent to $X$. In particular, the inclusion $X^H\subset X$
induces a homeomorphism $X^H/N\cong X/G$.

Denote $K=N/H$. Given an arbitrary $G$-space $Y$, there is a
bijection $\kappa_{X,Y}$ between $G$-equivariant mappings $X\r Y$
and $K$-equivariant mappings $X^H\r Y^H$ such that
$\kappa_{X,Y}(f)=f|_{X^H}$.

Notice that $K$ acts freely on $X^H$ and the homeomorphism
$X^H/N\cong X/G$ induces the homeomorphism $X^H/K\cong X/G$. In
particular, we get the principal $K$-bundle $\pi_X:X^H\r X/G$,
where $\pi_X$ is the restriction to $X^H$ of the projection
$\pi:X\r X/G$.

Now our generalization of Theorem 1.2 takes the following form.
\begin{thm}
Let $G$ be a compact  Lie group acting   on paracompact connected
smooth manifolds $M$ and $N$ with the same orbit type $G/H$.
 If there is a  group isomorphism
$\f:\H_G(M)_0\cong \H_G(N)_0$ then we get an isomorphism
$\hat\f:\H_K(M^H)_0\cong\H_K(N^H)_0$ given by
$\hat\f=\kappa_{M,N}\ci\f\ci\kappa_{M,N}^{-1}$. Furthermore, the
assertions of Theorem 1.2 hold if we replace $\pi_M:M\r B_M$ and
$\pi_N:N\r B_N$ by $\pi_M:M^H\r M/G$ and $\pi_N:N^H\r N/G$, resp.
In particular, there exist a homeomorphism $\tau:M/G\cong N/G$ and
a continuous mapping $\bar\f:M/G\r \Aut(K)$, and they induce local
homeomorphisms between the spaces $M^H$ and $N^H$.
\end{thm}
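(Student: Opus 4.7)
The plan is to reduce Theorem 5.1 directly to Theorem 1.2 by exploiting the structural identification $M\cong G\t_N M^H$ recalled at the beginning of the section. The key point is that the restriction map $\kappa_{M,N}$ already supplies a bijection between $G$-equivariant maps $M\r N$ and $K$-equivariant maps $M^H\r N^H$, and $K=N/H$ acts freely on $M^H$ (and similarly on $N^H$), so $\pi_M:M^H\r M/G$ and $\pi_N:N^H\r N/G$ are principal $K$-bundles.

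First I would verify that $\kappa_{M,M}$ restricts to a group isomorphism $\H_G(M)_0\cong\H_K(M^H)_0$. If $f\in\H_G(M)$, then for $x\in M^H$ and $h\in H$ one has $h.f(x)=f(h.x)=f(x)$, so $f(M^H)\s M^H$, and since the same holds for $f^{-1}$, the map $f|_{M^H}$ is a homeomorphism of $M^H$. Because $N=\N^G(H)$ preserves $M^H$ and $H$ acts trivially there, $f|_{M^H}$ is $K$-equivariant. Conversely, any $K$-equivariant self-homeomorphism of $M^H$ extends uniquely to a $G$-equivariant self-homeomorphism of $M$ via the identification $M\cong G\t_N M^H$, namely $[g,x]\mapsto[g,\bar f(x)]$. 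Restriction preserves composition, so this is a group isomorphism on the ambient $G$- and $K$-equivariant homeomorphism groups. To descend to the identity components $\H_G(M)_0$ and $\H_K(M^H)_0$, one uses that $\supp(\kappa_{M,M}(f))=\supp(f)\cap M^H$ and $\supp(f)=G.\supp(\kappa_{M,M}(f))$; the latter is compact iff the former is, because $G$ is compact. Consequently a compactly supported $G$-equivariant isotopy of $M$ restricts to a compactly supported $K$-equivariant isotopy of $M^H$, and vice versa.

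Next, the isomorphism $\hat\f:=\kappa_{M,N}\ci\f\ci\kappa_{M,N}^{-1}$ is then a well-defined group isomorphism $\H_K(M^H)_0\cong\H_K(N^H)_0$ between equivariant homeomorphism groups of paracompact smooth manifolds on which the compact Lie group $K$ acts freely. Applying Theorem~1.2 to $\hat\f$ (working component-by-component on $M^H$ if necessary, since connectedness of $M$ only yields connectedness of $M/G\cong M^H/K$, not of $M^H$ itself) produces a homeomorphism $M^H/K\cong N^H/K$, a continuous mapping $M^H/K\r\Aut(K)$, and, for each point, the local isomorphisms between section and gauge groups intertwining $\hat\f$. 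Pulling these back through the canonical homeomorphisms $M^H/K\cong M/G$ and $N^H/K\cong N/G$ (induced by the inclusion $M^H\hookrightarrow M$ composed with the orbit projection) yields the required $\tau:M/G\cong N/G$, $\bar\f:M/G\r\Aut(K)$, and local homeomorphisms between $M^H$ and $N^H$.

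The main obstacle I expect is the verification in the first step that $\kappa_{M,M}$ restricts to a bijection of identity components, that is, that compactly supported equivariant isotopies correspond exactly in both directions. Extension of an isotopy from $M^H$ to $M$ through the identification $M\cong G\t_N M^H$ is automatic at the level of homeomorphisms, but one must check carefully that extended isotopies remain continuous as maps on $M$ and have compact support there; both facts follow from compactness of $G$ together with the fact that the orbit map $G\t M^H\r M$, $(g,x)\mapsto g.x$, is a quotient map onto $M$. A minor further point, needed only to apply Theorem~1.2 in Step~3, is that $M^H$ may fail to be connected; this is handled by a standard argument working on each component and noting that the output objects ($\tau$, $\bar\f$, local bundle isomorphisms) glue together since they are determined by $\f$ on $\H_G(M)_0$, which does not distinguish components of $M^H$.
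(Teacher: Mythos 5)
Your proposal takes essentially the same route as the paper: the paper's own justification of Theorem 5.1 is little more than the remark that restriction to the $H$-fixed point set (the map $\kappa$) carries $\H_G(M)_0$ isomorphically onto $\H_K(M^H)_0$, after which the assertions follow by running Theorem 1.2 on the principal $K$-bundles $\pi_M:M^H\r M/G$ and $\pi_N:N^H\r N/G$ and transporting the output through $M^H/K\cong M/G$, $N^H/K\cong N/G$. Your first two steps simply supply the details the paper leaves implicit (invariance of $M^H$, $K$-equivariance of the restriction, the inverse via $M\cong G\t_N M^H$, and the correspondence of compactly supported equivariant isotopies using compactness of $G$), and these details are correct.

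The one point where you go beyond the paper, the observation that $M^H$ need not be connected even though $M$ is, is legitimate (the paper is silent about it), but your patch does not work as stated. Theorem 1.2 is about a free action of the whole group on a connected manifold, whereas in general $K$ permutes the components of $M^H$ transitively: every component projects onto the connected base $M/G$ (the projection is open and, $K$ being compact, closed), so any two components meet a common fiber, on which $K$ acts transitively. Hence on a single component only the stabilizer subgroup $K_1\leq K$ of that component acts, and a component-by-component application of Theorem 1.2 with the group $K$ is simply not available; the concluding claim that the outputs ``glue because they are determined by $\f$'' is not an argument. Two honest repairs: (a) note that the proof of Theorem 1.2 (Sections 2--4) uses connectedness only of the base of the principal bundle, and $M/G\cong M^H/K$ is connected, so the machinery applies verbatim to $M^H\r M/G$ even when the total space is disconnected -- this is the reading consistent with the statement, which wants $\bar\f:M/G\r\Aut(K)$; or (b) fix one component $P_1$ and check that restriction gives an isomorphism $\H_K(M^H)_0\cong\H_{K_1}(P_1)_0$ (an element of $\H_K(M^H)_0$ preserves components and is determined by, and reconstructed $K$-equivariantly from, its restriction to $P_1$), then apply Theorem 1.2 with the group $K_1$, at the cost of obtaining $\Aut(K_1)$ and having to translate back. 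With either repair your argument is a filled-in version of the paper's proof rather than a departure from it.
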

In fact, if $f\in\H_G(M)_0$ then $f|_{M^H}\in\H_K(M^H)_0$.


\begin{thebibliography}{99}


\bibitem{ban-m}
A. Banyaga, \wyr{The structure of classical diffeomorphism
groups}, Mathematics and its Applications, 400, Kluwer Academic
Publishers Group, Dordrecht, 1997

\bibitem{ban1}
A. Banyaga, \emph{On isomorphic classical diffeomorphism groups},
II, J. Diff. Geometry 28(1988), 23-35.

\bibitem{be-ru}
E. Ben Ami and M. Rubin, \emph{On the reconstruction problem of
factorizable homeomorphism groups and foliated manifolds },
Topology Appl. 157(2010), 1664-1679.

\bibitem{bo-br}
J. E. Borzellino, V. Brunsden, \emph{Determination of the
topological structure of an orbifold by its group of orbifold
diffeomorphisms}, J. Lie Theory 13(2003), 311-327.

\bibitem{bour}
N. Bourbaki, \emph{Groupes et alg\'ebres de Lie}, ch. III,
\'El\'ements de math., fasc. XXXVII, Hermann, Paris 1972.


\bibitem{br}
 G. E. Bredon, \wyr{ Introduction to compact transformation
groups},  Academic Press, New York - London  (1972)


\bibitem{ed-ki}
R. D. Edwards, R. C. Kirby, \emph{Deformations of spaces of
imbeddings}, Ann. Math. 93 (1971), 63-88

\bibitem{fil}
R. P. Filipkiewicz, \emph{Isomorphisms between diffeomorphism
groups}, Ergod. Th. Dynam. Sys., 2(1982), 159-171.

\bibitem{gle}
A. M. Gleason, \emph{Spaces with a compact Lie group of
transformations}, Proc. Amer. Math. Soc. 1(1950), 35-43.

\bibitem{gr}
J. Grabowski, \emph{Isomorphisms and ideals of the Lie algebra of
vector fields}, Invent. Math. 50(1978), 13-33.

\bibitem{gr-po}
J. Grabowski, N. Poncin, \emph{Lie algebra characterization of
manifolds}, Cent. Eur. J. Math. 2(2004), 811-825.

\bibitem{hir}
M. W. Hirsch, \wyr{Differential Topology}, Graduate Texts in
Mathemetics 33, Springer 1976.


\bibitem{kmr}
A. Kowalik, I. Michalik, T. Rybicki, \emph{Reconstruction theorems
for two remarkable groups of diffeomorphisms}, Travaux
math\'ematiques, 18(2008), 77-86.

\bibitem{KM}
A. Kriegl, P. W. Michor, \wyr{The Convenient Setting of Global
Analysis}, Mathematical Surveys and Monographs, vol.53, American
Mathematical Society, 1997.




\bibitem{mich}
P. W. Michor, \emph{Gauge theory for fiber bundles}, Bibliopolis,
Monographs and textbooks in physical sciences, Napoli 1991.

\bibitem{om}
H.Omori, \emph{Infinite dimensional Lie transformation groups},
Lecture Notes in Math. {\bf 427}, Springer-Verlag 1974.


\bibitem{Ru1}
M. Rubin, \emph{On the reconstruction of~topological spaces from
their groups of~homeomorphisms}, Trans.Amer.Math. Soc., {\bf 312}
(1989), 487-537.

\bibitem{Ru2}
M. Rubin, \emph{The reconstruction of trees from their
automorphism groups}, Contemporary Math., 151, Amer. Math. Soc.,
Providence RI, 1993.

\bibitem{Ru3}
M. Rubin, \emph{Locally moving groups and reconstruction problems
}, Ordered Groups and Infinite Permutation Groups Math. Appl. {\bf
354}, Kluwer Acad. Publ. Dordrecht, (1996), 121-151.

\bibitem{RY}
M. Rubin, Y. Yomdin, \emph{Reconstruction of manifolds and subsets
of normed spaces from subgroups of their homeomorphism groups},
Dissertationes Math. 435(2005), 1-246.

\bibitem{Ru4}
M. Rubin, \emph{A reconstruction theorem for smooth foliated
manifolds}, Geom. Dedicata 150(2011), 355-375.

\bibitem{Ryb1}
T. Rybicki, \emph{ Isomorphisms between groups of
diffeomorphisms}, Proc. Amer. Math. Soc. {\bf 123}(1995), 303-310.

\bibitem{Ryb4}
T. Rybicki, \emph{Isomorphisms between leaf preserving
diffeomorphism groups}, Soochow J. Math. 22(1996), 525-542.

\bibitem{Ryb5}
T. Rybicki, \emph{On automorphisms of a Jacobi manifold}, Univ.
Iagel. Acta Math. 38(2000), 89-98.

\bibitem{Ryb2}
T. Rybicki, \emph{ Isomorphisms between groups of homeomorphisms},
Geometriae Dedicata {\bf 93}(2001), 71-76.

\bibitem{Ryb3}
T. Rybicki, \emph{On commutators of equivariant homeomorphisms},
Topol. Appl. 154(2007), 1561-1564.

\bibitem{whit}
J. V. Whittaker, \emph{On isomorphic groups and homeomorphic
spaces}, Ann. Math. 78(1963), 74-91.


\end{thebibliography}
\end{document}